\newtheorem{theorem}{Theorem}[section]
\newtheorem{conjecture}[theorem]{Conjecture}
\newtheorem{lemma}[theorem]{Lemma}
\DeclareMathOperator{\rad}{rad}
\newcommand{\boxempty}{\mathbin{\square}}
\title{Improved Bounds for Burning Fence Graphs}
\author{Anthony Bonato}
\address{Department of Mathematics\\
Ryerson University\\
Toronto, ON\\
Canada, M5B 2K3} \email{abonato@ryerson.ca}
\author{Sean English}
\address{Department of Mathematics\\
University of Illinois Urbana-Champaign\\
Urbana, IL\\
USA 61801} \email{senglish@illinois.edu}
\author{Bill Kay}
\address{Oak Ridge National Laboratory\\
Oak Ridge, TN\\
USA 37831} \email{kaybw@ornl.gov}
\author{Daniel Moghbel}
\address{Department of Mathematics\\
Ryerson University\\
Toronto, ON\\
Canada, M5B 2K3} \email{dmoghbel@ryerson.ca}
\keywords{graphs, burning number, Cartesian grids, fence graphs, bounds}
\thanks{The authors gratefully acknowledge support from NSERC, Ryerson University, and the Fields Institute for Research in Mathematical Sciences}
\subjclass[2010]{05C99,05C76}
\begin{document}

\begin{abstract}
Graph burning studies how fast a contagion, modeled as a set of fires, spreads in a graph. The burning process takes place in synchronous, discrete rounds. In each round, a fire breaks out at
a vertex, and the fire spreads to all vertices that are adjacent to a burning vertex. The burning number of a graph $G$ is the minimum number of rounds necessary for each vertex of $G$ to burn. We consider the burning number of the $m \times n$ Cartesian grid graphs, written $G_{m,n}$.\ For $m = \omega(\sqrt{n})$, the asymptotic value of the burning number of $G_{m,n}$ was determined, but only the growth rate of the burning number was investigated in the case $m = O(\sqrt{n})$, which we refer to as fence graphs. We provide new explicit bounds on the burning number of fence graphs $G_{c\sqrt{n},n}$, where $c > 0$.
\end{abstract}

\maketitle
	
\section{Introduction}

\textit{Graph burning} was introduced in \cite{Bon2014, Bon2016} to model the spread of social contagion, such as rumors, gossip, or memes in a network. Graph burning measures how fast contagion spreads in a given network, such as a social network like Facebook or Instagram. The smaller the burning number is, the faster a contagion spreads in the network.

Given a graph $G$, the burning process on $G$ is a discrete-time process defined as follows. Initially, at time $t=0$ all vertices are unburned. At each time step $t \geq 1$, one new unburned vertex
is chosen to burn (if such a vertex is available); such a vertex is called a \emph{source of fire}. If a vertex is burned, then it remains in that state until the end of the process. Once a vertex is
burned in round $t$, in round $t+1$ each of its unburned neighbors becomes burned. The process ends when all vertices of $G$ are burned (that is, let $T$ be the smallest positive integer such that
there is at least one vertex not burning in round $T-1$ and all vertices are burned in round $T$). The \emph{burning number} of a graph $G$, denoted by $b(G)$, is the minimum number of rounds needed
for the process to end. With our notation we have that $b(G) = T$. The vertices that are chosen to be burned are referred to as a \emph{burning sequence}. A shortest such sequence is called
\emph{optimal}. Note that optimal burning sequences have length $b(G).$ The problem of computing $b(G)$ is \textbf{NP}-complete in elementary graph classes such as spiders (that is, trees with exactly one vertex of degree greater than two) and path forests (which are disjoint unions of paths), as shown in \cite{Bessy2017}. Approximation algorithms for graph burning were studied in \cite{Bon-Kam,Kam}, and the burning of infinite Cartesian grids was studied in \cite{bgs}. A survey of graph burning may be found in \cite{bp}.

The burning number has been calculated for various graph families.  One particular family that will be useful to mention is paths, where in \cite{Bon2014}, it was shown that
$$
b(P_n)=\lceil \sqrt{n} \rceil.
$$
One of the most significant conjectures in the topic of graph burning states that the path is among the hardest connected graphs on $n$ vertices to burn. More precisely,
\begin{conjecture}[\cite{Bon2016}]
If $G$ is a connected graph of order $n$, then $b(G)\leq \lceil \sqrt{n}\rceil$.
\end{conjecture}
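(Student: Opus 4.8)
The plan is to recast the burning number as a ball-covering problem and attack it through spanning trees. First I would reduce to trees: since deleting edges can only increase distances, every closed ball $B(v,r)$ in a spanning subgraph is contained in the corresponding ball of $G$, so burning cannot get easier, giving $b(G) \le b(T)$ for any spanning tree $T$ of the connected graph $G$. Thus it suffices to establish the bound for trees, and in fact only for the extremal family of trees. Next I would reformulate the target. Writing $B(v,r)$ for the closed ball of radius $r$, one checks that $b(T) \le k$ precisely when there exist vertices $v_1, \ldots, v_k$ with $\bigcup_{i=1}^{k} B(v_i, k-i) = V(T)$, since lighting $v_i$ in round $i$ lets its fire spread for $k-i$ rounds by the end of round $k$. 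With $k = \lceil \sqrt{n}\rceil$, the goal becomes covering $V(T)$ by balls of radii $k-1, k-2, \ldots, 1, 0$. The path shows this is tight: in $P_n$ we have $|B(v,r)| \le 2r+1$, so any covering supplies at most $\sum_{r=0}^{k-1}(2r+1) = k^2$ vertices of coverage, forcing $k \ge \sqrt{n}$; this matches $b(P_n) = \lceil\sqrt{n}\rceil$ and is exactly why the path is conjectured to be extremal.

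For the selection of fire sources I would proceed greedily, assigning radii from largest to smallest. At each step, among the currently uncovered vertices I would choose a center that is as ``deep'' as possible, for instance an endpoint of a longest remaining path inside its component, assign it the largest unused radius, delete the covered ball, and recurse on the resulting components. The intuition is that a ball of radius $r$ always covers at least $2r+1$ vertices of any path it meets, and covers strictly more wherever the tree branches, so a charging argument should show that the radii $k-1, \ldots, 0$ together cover at least $n$ vertices whenever $k = \lceil\sqrt{n}\rceil$. High-degree trees have plenty of slack here, since balls swallow many vertices at once.

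The hard part will be controlling overlaps and the near-path trees. On a pure path the counting is exactly tight, so there is zero slack: any wasted coverage, whether from two balls overlapping or from a ball straddling a branch vertex and thereby covering fewer than $2r+1$ vertices on some branch, immediately breaks the bound. The genuinely difficult instances are spiders and caterpillars, which are path-like enough to negate the efficiency one gains from branching yet branched enough to force overlaps in any greedy allocation, so the crux is proving that the allocation can be made essentially lossless on exactly these graphs. This is precisely the difficulty that keeps the conjecture open: current techniques yield only the asymptotic statement $b(G) \le (1+o(1))\sqrt{n}$, and closing the remaining gap to the exact bound $\lceil\sqrt{n}\rceil$ is the main obstacle.
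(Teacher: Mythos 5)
The statement you were asked to prove is not a theorem of this paper at all: it is the burning number conjecture, stated as a \emph{conjecture} (cited from Bonato--Janssen--Roshanbin), and the paper offers no proof of it because none is known. So the honest verdict is that your proposal has a genuine gap --- one you yourself concede in the final sentence. Your first two reductions are correct and standard: burning a spanning tree dominates burning $G$ (this is the subgraph monotonicity the paper records in Lemma~\ref{lemma subgraph burning}), and $b(T)\le k$ if and only if $V(T)$ can be covered by balls of radii $k-1,k-2,\dots,0$. But the entire content of the conjecture lies in the step you leave to ``a charging argument should show,'' and no such argument is supplied. Worse, the intermediate claim you lean on is false as stated: a ball of radius $r$ does \emph{not} always cover at least $2r+1$ vertices of a path it meets --- a center within distance $r$ of a path's endpoint, or a ball truncated at a leaf of a spider, covers strictly fewer. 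This is not a technicality; it is exactly the loss that makes spiders and path-forests hard (the paper notes that burning is \textbf{NP}-complete already on these classes), and any greedy radius assignment must account for it with zero total slack, since on $P_n$ the count $\sum_{r=0}^{k-1}(2r+1)=k^2$ is tight.

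For context on what is actually provable along your lines: covering arguments of this type yield $b(G)\le(1+o(1))\sqrt{n}$ and, with more care, bounds such as $b(G)\le\sqrt{4n/3}+1$ (Bessy, Bonato, Janssen, Rautenbach, Roshanbin, \emph{Bounds on the burning number}), but closing the gap to $\lceil\sqrt{n}\rceil$ has resisted precisely the overlap/truncation accounting you identify. So your proposal correctly locates the difficulty, but it is a research program, not a proof, and it should not be presented as establishing the statement.
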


In the present paper, we focus on the burning number of certain grid graphs. The \textit{Cartesian product} of $G$ and $H$, denoted $G \boxempty H$, has vertex set $V(G) \times V(H)$ and $(u_1, u_2)(v_1, v_2) \in E(G \boxempty H)$ if either $u_1 = v_1$ and $u_2 v_2 \in E(H)$, or $u_1 v_1 \in E(G)$ and $u_2 = v_2$. We focus on the burning number of the $m \times n$ \emph{Cartesian grid} defined as $P_m \boxempty P_n$ and denoted by $G_{m,n}.$ The value of $b(G_{m,n})$ for $m$ a function of $n$ was first studied in \cite{Mits2017}.

\begin{theorem}[\cite{Mits2017}] \label{thm grid burning number}
For $m = m(n)$, \[b\big(G_{m,n}\big) = \begin{cases}
(1 + o(1))\sqrt[3]{\frac{3}{2} mn} \: & \mbox{ if }n \geq m = \omega\big(\sqrt{n}\big), \\[0.5cm]
\Theta\big(\sqrt{n}\big) \: & \mbox{ if } m = O\big(\sqrt{n}\big).
\end{cases} \]
\end{theorem}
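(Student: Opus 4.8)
The plan is to recast burning as a covering problem and then handle the two regimes by different means, since they are governed by genuinely different phenomena. The starting point is the standard observation that if $x_1,\ldots,x_k$ is a burning sequence, then by the final round the source $x_i$ has burned precisely the ball of radius $k-i$ about $x_i$ in the graph metric; hence $b(G)\le k$ if and only if $V(G)$ is covered by balls of radii $k-1,k-2,\ldots,1,0$. In $G_{m,n}$ the graph metric is the $\ell_1$ distance, so a ball of radius $r$ is a diamond containing at most $2r^2+2r+1$ vertices, with equality unless it is truncated by the boundary of the grid. Every bound below is read off from this single reformulation.

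For the lower bounds I would argue by volume. Since the radii available are $0,1,\ldots,k-1$, any burned set has size at most $\sum_{r=0}^{k-1}(2r^2+2r+1)=\frac{2}{3}k^3+O(k^2)$, and forcing this to be at least $mn$ yields $b(G_{m,n})\ge (1-o(1))\sqrt[3]{\tfrac{3}{2}mn}$; this is already the sought lower bound in the regime $m=\omega(\sqrt n)$. In the fence regime I would add a projection bound onto the long axis: a diamond of radius $r$ meets at most $2r+1$ of the $n$ columns, and every column must contain a burned vertex, so $\sum_{r=0}^{k-1}(2r+1)=k^2\ge n$, giving $b(G_{m,n})\ge \lceil\sqrt n\rceil$. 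Because $m=O(\sqrt n)$ forces $\sqrt[3]{\tfrac{3}{2}mn}=O(\sqrt n)$, this projection bound is the dominant one and establishes the $\Omega(\sqrt n)$ lower bound there.

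The upper bound in the fence regime is a band-covering construction. If $r\ge \tfrac m2$, then a diamond of radius $r$ centered in a central row covers every vertex of a full block of $2r-m+O(1)$ consecutive columns, since the vertical displacement of at most $\lceil\tfrac m2\rceil$ still leaves room within the radius. Summing the block widths over $r$ from $\lceil\tfrac m2\rceil$ to $k-1$ covers $(1-o(1))\big(k-\tfrac m2\big)^2$ columns, so with $m=c\sqrt n$ and $k$ a sufficiently large multiple of $\sqrt n$ (any $k\ge(1+\tfrac c2)\sqrt n$ suffices up to lower order) the diamonds of radius at least $\tfrac m2$ already blanket all $n$ columns. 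This gives $b(G_{c\sqrt n,n})=O(\sqrt n)$ and, with the previous paragraph, the exact order $\Theta(\sqrt n)$.

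The remaining and substantially harder task is the matching upper bound $b(G_{m,n})\le (1+o(1))\sqrt[3]{\tfrac{3}{2}mn}$ when $m=\omega(\sqrt n)$. Here one first notes that $k\approx\sqrt[3]{mn}$ satisfies $k=o(m)$ and $k=o(n)$, so the diamonds fit comfortably in the interior and boundary truncation affects only $O\big(k(m+n)\big)=o(mn)$ vertices. After a $45^\circ$ change of coordinates the diamonds become axis-aligned squares of side lengths roughly $2r$, and the task becomes covering the rotated rectangle by squares of \emph{distinct} sizes, one for each $r\in\{0,\ldots,k-1\}$, with asymptotically vanishing waste; I would build this by a staircase-and-strip layout in which consecutively sized squares interlock. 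The crux of the whole theorem lies precisely here: because only one diamond of each radius is available, no single periodic tiling can be used, and the entire difficulty is arranging all $k$ distinct sizes so that the uncovered plus over-covered area is $o(k^3)$. Only then does the total area $\tfrac{2}{3}k^3=mn$ suffice and the constant $\sqrt[3]{3/2}$ match the volume lower bound, which is why I expect this covering step to be the main obstacle.
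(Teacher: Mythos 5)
This statement is quoted from \cite{Mits2017}; the present paper does not prove it, so your proposal has to stand on its own. The parts you carry out are correct: the covering reformulation of burning, the volume lower bound $\sum_{r=0}^{k-1}(2r^2+2r+1)=\tfrac{2}{3}k^3+O(k^2)\geq mn$, the column-projection bound $k^2\geq n$, and the band-covering upper bound $k\approx \tfrac{m}{2}+\sqrt{n}$ when $m=O(\sqrt{n})$. Together these settle the second case, $b(G_{m,n})=\Theta(\sqrt{n})$, completely, and they give the lower bound in the first case, including the correct observation that $k=\sqrt[3]{mn}=o(m)$ is exactly equivalent to $m=\omega(\sqrt{n})$.

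The genuine gap is precisely where you flag it: the upper bound $b(G_{m,n})\leq (1+o(1))\sqrt[3]{\tfrac{3}{2}mn}$ for $n\geq m=\omega(\sqrt{n})$ is asserted via an unspecified ``staircase-and-strip layout,'' with no construction and no accounting of wasted area. This is not a deferrable detail; it is the main content of the theorem, and naive shortcuts demonstrably fail. For instance, one might hope to exploit the fact that equal-radius diamonds tile the plane by using every ball of radius at least $\rho$ as if it had radius exactly $\rho$; but the volume so covered is at most $(k-\rho)\left(2\rho^2+O(\rho)\right)$, which is maximized at $\rho=2k/3$ and yields only $\tfrac{8}{27}k^3$, forcing $k\geq (1-o(1))\tfrac{3}{2}\sqrt[3]{mn}$ --- a constant factor worse than the claimed $\sqrt[3]{\tfrac{3}{2}mn}\approx 1.145\sqrt[3]{mn}$. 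To attain the optimal constant one must let essentially every radius contribute close to its own full volume: for example, partition the radii into blocks $\left[(1-\delta)^{j+1}k,(1-\delta)^{j}k\right)$, treat the balls in a block as equal diamonds of the block's smallest radius, tile a horizontal strip of the grid with each block, and then verify that the per-ball loss $O(\delta)$ plus the $O(nk)$ boundary waste per strip total $o(k^3)$ for fixed $\delta$ (this last step is exactly where the hypothesis $m=\omega(\sqrt{n})$ is needed again), finally letting $\delta\to 0$. Without carrying out this construction or an equivalent one, your proposal proves the fence case and only half of the dense case.
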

Notice that while Theorem~\ref{thm grid burning number} gives an asymptotically tight value for the burning number of grids where $n \geq m = \omega\big(\sqrt{n}\big),$ only the growth rate is given in the remaining case where $m = O\big(\sqrt{n}\big)$. We refer to the family of grids $b(G_{c\sqrt{n},n})$ for constant $c > 0$ as \emph{fences}, as they are by definition wider than they are tall. \Cref{Fig: Burning 4 by 16 Fence} illustrates a burning sequence for the fence $G_{4,16}$.
\begin{figure}[h]
	\centering
		\begin{subfigure}[b]{0.99\textwidth}
		\centering
		\begin{tikzpicture}
		[scale = 0.99, every edge/.style = {draw = black, very thick}]
		\tikzstyle{vertex} = [circle, draw = black, fill = white]
		\tikzstyle{fire source} = [vertex, fill = Red!80]
		\tikzstyle{every node}=[font=\scriptsize]
		\draw (0,0) grid (15,3);
		\node[vertex] at (0,0) {4};
		\node[vertex] at (0,1) {3};
		\node[vertex] at (0,2) {4};
		\node[vertex] at (0,3) {5};
		\node[vertex] at (1,3) {4};
		\node[vertex] at (1,2) {3};
		\node[fire source] at (1,1) {2};
		\node[vertex] at (1,0) {3};
		\node[vertex] at (2,0) {4};
		\node[vertex] at (2,1) {3};
		\node[vertex] at (2,2) {4};
		\node[vertex] at (2,3) {5};
		\node[vertex] at (3,3) {6};
		\node[vertex] at (3,2) {5};
		\node[vertex] at (3,1) {4};
		\node[vertex] at (3,0) {5};
		\node[vertex] at (4,0) {6};
		\node[vertex] at (4,1) {5};
		\node[vertex] at (4,2) {4};
		\node[vertex] at (4,3) {5};
		\node[vertex] at (5,3) {4};
		\node[vertex] at (5,2) {3};
		\node[vertex] at (5,1) {4};
		\node[vertex] at (5,0) {5};
		\node[vertex] at (6,0) {4};
		\node[vertex] at (6,1) {3};
		\node[vertex] at (6,2) {2};
		\node[vertex] at (6,3) {3};
		\node[vertex] at (7,3) {2};
		\node[fire source] at (7,2) {1};
		\node[vertex] at (7,1) {2};
		\node[vertex] at (7,0) {3};
		\node[vertex] at (8,0) {4};
		\node[vertex] at (8,1) {3};
		\node[vertex] at (8,2) {2};
		\node[vertex] at (8,3) {3};
		\node[vertex] at (9,3) {4};
		\node[vertex] at (9,2) {3};
		\node[vertex] at (9,1) {4};
		\node[vertex] at (9,0) {5};
		\node[vertex] at (10,0) {6};
		\node[vertex] at (10,1) {5};
		\node[vertex] at (10,2) {4};
		\node[vertex] at (10,3) {5};
		\node[vertex] at (11,3) {6};
		\node[vertex] at (11,2) {5};
		\node[fire source] at (11,1) {4};
		\node[vertex] at (11,0) {5};
		\node[vertex] at (12,0) {6};
		\node[vertex] at (12,1) {5};
		\node[vertex] at (12,2) {6};
		\node[fire source] at (12,3) {5};
		\node[vertex] at (13,3) {6};
		\node[vertex] at (13,2) {5};
		\node[vertex] at (13,1) {4};
		\node[vertex] at (13,0) {5};
		\node[vertex] at (14,0) {4};
		\node[fire source] at (14,1) {3};
		\node[vertex] at (14,2) {4};
		\node[vertex] at (14,3) {5};
		\node[fire source] at  (15,3) {6};
		\node[vertex] at (15,2) {5};
		\node[vertex] at (15,1) {4};
		\node[vertex] at (15,0) {5};
		\end{tikzpicture}
		\end{subfigure}
\caption{A $4 \times 16$ fence, where a vertex is labeled $i$ if it is burned in round $i$. The burning sequence depicted here is $(x_1,x_2,x_3,x_4,x_5,x_6)$, where $x_i$ is the red vertex labeled $i$ for $1\le i \le 6$.}
\label{Fig: Burning 4 by 16 Fence}
\end{figure}
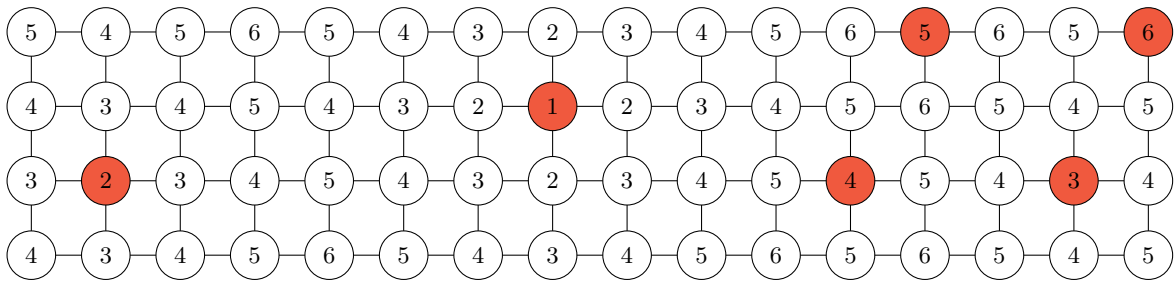

In the present paper, we improve on Theorem~\ref{thm grid burning number}, giving explicit lower and upper bounds on the burning number of fences. We prove the following theorem, which is our main result.
\begin{theorem} \label{thm main result}
Let $c > 0$. If $\ell = \max\{k\in\mathbb{N} : (k - 1)\sqrt{kn} + 1 \leq c\sqrt{n}\}$, then we have that
\[
b(c\sqrt{n},n) \geq \begin{cases}
\big(1 + o(1) \big) \left(\frac{c}2+\sqrt{1-\frac{c^2}4}\right) \sqrt{n} \,, & \mbox{ if } 0 < c < 2, \\[0.5cm]
\sqrt{\ell n} \,, & \mbox{ if } c \geq 2.
\end{cases}
\]
If $\ell=\lceil (c/2)^{2/3}\rceil,$ then we have that
\[
b(G_{c\sqrt{n},n})\leq 2\sqrt{\ell n}+\ell-1,
\]
and for $0<c\leq 2\sqrt{2}$, we have that
\[
b(G_{c\sqrt{n},n})\leq (1+o(1))\left(\frac{c}{2}+\sqrt{1-\frac{c^2}{16}}\right)\sqrt{n},
\]
\end{theorem}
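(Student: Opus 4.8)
The plan is to prove this upper bound by exhibiting an explicit burning sequence of length $b=\left(1+o(1)\right)\left(\tfrac{c}{2}+\sqrt{1-c^2/16}\right)\sqrt n$ and checking that it burns all of $G_{m,n}$, where $m=\lceil c\sqrt n\rceil$. Recall that if sources are lit in rounds $1,2,\dots,b$, then the source lit in round $i$ has burned the whole ball of radius $b-i$ about it (in the graph metric, which on the grid is the $\ell_1$ metric) by the end of round $b$. Thus it suffices to place the $b$ sources so that the balls of radii $b-1,b-2,\dots,1,0$ cover $V(G_{m,n})$. Geometrically each ball is a diamond which, once its radius exceeds half the height, is truncated by the top and bottom sides of the fence into a hexagon spanning the full height.

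The construction I would use places the sources in two horizontal rows, at heights approximately $m/4$ and $3m/4$ (the centres of the bottom and top halves of the fence), and distributes the available radii between the two rows, say by alternating them. The key estimate is that a ball of radius $r$ centred at height $m/4$ covers the bottom row of the fence over a horizontal interval of width $2(r-m/4)$, while a ball centred at height $3m/4$ covers it over width $2(r-3m/4)$ (and symmetrically for the top row). The rows hardest to cover are the two extreme rows, so the binding requirement is that the widths contributed there sum to at least $n$. Summing $2(r-m/4)$ and $2(r-3m/4)$ over the radii assigned to the two rows and passing to the continuous limit with $r=\rho\sqrt n$ and $b=\beta\sqrt n$ yields the condition $(\beta-c/4)^2+(\beta-3c/4)^2\ge 2$, which rearranges to $\beta^2-c\beta+\tfrac{5c^2}{16}=1$, whose relevant root is exactly $\beta=\tfrac{c}{2}+\sqrt{1-c^2/16}$.

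This viewpoint also explains the hypothesis $c\le 2\sqrt2$. For the far row to contribute anything to an extreme row its balls must reach across a vertical distance of $3m/4$, so we need $b\ge 3m/4$, that is $\beta\ge 3c/4$. At the solution $\beta=\tfrac{c}{2}+\sqrt{1-c^2/16}$ this is equivalent to $\sqrt{1-c^2/16}\ge c/4$, i.e. to $c\le 2\sqrt2$; for larger $c$ the two-row scheme can no longer reach the opposite side, and one must instead use more than two rows, giving the coarser bound $2\sqrt{\ell n}+\ell-1$.

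The main obstacle is upgrading the width count, which is only a necessary condition, into an actual covering: I must choose the horizontal positions of all the balls so that every one of the $m$ rows, not just the extreme ones, is covered with no horizontal gaps, and simultaneously for both rows of sources. The large radii (those exceeding $3m/4$) pair up to tile full-height columns of the fence, the intermediate radii (between $m/4$ and $3m/4$) are placed on the two rows to fill the top and bottom bands, and the smallest radii mop up the triangular corner regions left near the junctions and at the two ends of the fence; verifying that these pieces interlock, and that the intermediate and small balls are plentiful enough to do so without inflating the leading term, is the crux of the argument. The remaining work is routine: absorbing the rounding of $m$ and $b$ to integers and the boundary fans at the two ends into the $1+o(1)$ factor.
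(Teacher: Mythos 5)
Your proposal addresses only one of the four claims in the theorem. The statement bundles two lower bounds (for $0<c<2$ and for $c\geq 2$) and two upper bounds (the bound $2\sqrt{\ell n}+\ell-1$ valid for all $c$, and the refined bound for $0<c\leq 2\sqrt{2}$); you prove only the last of these, mention the bound $2\sqrt{\ell n}+\ell-1$ in passing as a fallback without constructing it, and say nothing about the lower bounds. The lower bounds are not a routine omission: they require a genuinely different mechanism, namely bounding from above how many vertices of a union of well-separated horizontal paths a single ball $B(v,t)$ can contain (the paper's \Cref{lemma conservation}), and then a counting argument over the burning sequence (\Cref{proposition c<2,proposition c>2}). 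Nothing in your covering construction yields any lower bound, so as a proof of the stated theorem the proposal is substantially incomplete.

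For the claim you do address, your two-row configuration at heights $m/4$ and $3m/4$ is the right one, your quadratic $(\beta-c/4)^2+(\beta-3c/4)^2\geq 2$, equivalently $\beta^2-c\beta+\tfrac{5c^2}{16}-1\geq 0$, is algebraically identical to the constraint underlying the paper's proof, and your explanation of the threshold $c\leq 2\sqrt{2}$ matches the paper's requirement $c'=c/2\leq\sqrt{2}$ in \Cref{lemma burn top and bottom}. However, you have explicitly left the crux unproved: converting the width count on the two extreme rows (a necessary condition) into an actual covering of all $m$ rows by the diamonds. Your proposed remedy (pairing large diamonds into full-height columns, filling notches with small diamonds) is exactly the delicate interlocking that needs verification, and it is not carried out. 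The paper sidesteps this entirely and you could too: it never attempts to cover every row with balls. Instead it burns \emph{only} the two horizontal paths at heights $m/4$ and $3m/4$, which is a one-dimensional tiling problem --- large sources placed on the paths tile intervals with no overlap, and the at most three leftover segments are burned by the small sources via the subgraph-burning lemma (\Cref{lemma subgraph burning}) --- and then invokes \Cref{lemma product bound generalization}: since every vertex of the fence is within distance $m/4+1$ of these two paths, at most $m/4+1$ additional rounds finish the burning, contributing the second $c/4$ to the constant. Adopting that reduction makes your counting condition sufficient rather than merely necessary and eliminates the two-dimensional interlocking problem; without it (or some completed substitute), your argument for even this one claim has a gap at its central step.
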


The lower bound in Theorem \ref{thm main result} will follow immediately from Theorems~\ref{proposition c<2} and \ref{proposition c>2}, while the upper bound follows from Theorems~\ref{theorem upper bound large c} and \ref{proposition upper bound}. Potentially the most interesting case for fences is $G_{\sqrt{n},n}$. In this case, our lower bound is
\[
(1+o(1))\frac{1+\sqrt{3}}2\sqrt{n}\approx 1.366\sqrt{n},
\]
while our upper bound is
\[
(1+o(1))\frac{2+\sqrt{15}}4\sqrt{n}\approx 1.468\sqrt{n}.
\]
 It is worth noting here that the asymptotic value given in Theorem \ref{thm grid burning number} for $G_{m,n}$ when $m=\omega(\sqrt{n})$ does not hold when $m=O(\sqrt{n})$, as witnessed by the case $c=1$, in which the bound would give
 \[
 (1+o(1))\sqrt[3]{\frac{3}2}\sqrt{n}\approx 1.145\sqrt{n},
 \]
 far below our lower bound.

The rest of the paper is organized as follows: In Section \ref{subsection definitions}, we present definitions and notation we will use throughout the paper. In Section~\ref{subsection partial burning}, we present a useful tool in graph burning, which we refer to as the partial burning number. In Sections \ref{sec: lower bound} and \ref{sec: upper bound}, we will present a series of lemmas which will culminate in our lower and upper bounds, respectively.

\subsection{Definitions and Notation}\label{subsection definitions}

Denote the distance between vertices $u$ and $v$ of a graph $G$ by $d_G (u,v)$. The \emph{ball of radius $r$  centered at $v$} in $G$ is the set
\[
B_G(v,r) = \{w \in V(G) : d_G (v,w) \leq r\} .
\]
Often we will omit the subscript of $G$ and instead write $d(u,v)$ or $B(v,r)$ if it is clear from context which graph we are taking distances from. For a set $X$ of vertices and a vertex $u,$ we write $d(u,X)$ for the minimum distance between $u$ and a vertex of $X.$  Note that a source of fire $v$ that burns for $k$ rounds burns all the vertices in $B(v,k-1)$. The \emph{radius} of a graph $G,$ denoted $\mathrm{rad}(G),$ is the smallest integer $r$ such that there exists a vertex $v\in V(G)$ such that $B(v,r)=V(G)$. Given a graph $G$ and a vertex $u\not \in V(G)$, we let $G+u$ denote the graph with $V(G+u)=V(G)\cup\{u\}$ and $E(G+u)=E(G)$. Similarly, if $u,v\in V(G)$, $uv\not\in E(G)$, the graph $G+uv$ is the graph with vertex set $V(G+uv)=V(G)$ and edge set $E(G+uv)=E(G)\cup \{uv\}$. Given an integer $k$, we will let $[k]=\{x\in\mathbb{N}: x\leq k\}$. All graphs considered in this paper are finite, simple, and undirected. For more background on graph theory, the reader is directed to \cite{West}.


\subsection{Partial Burning}\label{subsection partial burning}

In burning a graph $G$, instead of requiring all the vertices to burn, we can instead only require that a subset $S \subseteq V(G)$ burns. We define the \textit{partial burning number of $G$ with respect to $S$} as the minimum number of rounds necessary to burn all the vertices in $S$ (and possibly some in $V(G)\setminus S)$). We denote this parameter by $b(G,S)$. Observe that $b(G,V(G)) = b(G)$.

In \cite{Bon2016}, it was observed that if $H$ is a spanning subgraph of $G$, $b(H)\leq b(G)$. Further, in \cite{thez}, it was noted that adding edges between distinct components of a graph can increase the burning number by at most one. We now present a result that generalizes and extends these ideas to the setting of partial burning.

\begin{lemma}\label{lemma subgraph burning}
	If $H$ is a subgraph of $G$ and $X\subseteq V(H)$, then
	\[
	b(G,X)\leq b(H,X)\leq b(G,X)+|E(G)|-|E(H)|.
	\]
\end{lemma}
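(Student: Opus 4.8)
The plan is to prove the two inequalities separately. The left inequality is pure monotonicity of balls under adding edges, while the right inequality is the substance and will be reduced to a single-edge statement that I prove by induction on burning times.

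For the left inequality $b(G,X)\le b(H,X)$: since $H$ is a subgraph of $G$, every path of $H$ is also a path of $G$, so $d_G(u,v)\le d_H(u,v)$ and hence $B_H(v,r)\subseteq B_G(v,r)$ for all $v\in V(H)$ and $r\ge 0$. Consequently, any burning sequence burning $X$ in $H$ within $k$ rounds, whose sources lie in $V(H)\subseteq V(G)$, burns at least as much in $G$ in the same number of rounds and so burns $X$ in $G$. Applying this to an optimal sequence for $b(H,X)$ gives the claim. For the right inequality I first reduce to spanning subgraphs: let $H'$ be $H$ together with the vertices of $V(G)\setminus V(H)$ added as isolated vertices. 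These added vertices lie outside $X$, so they neither help nor hinder burning $X$; combining this with the already-proven left inequality applied to $H\subseteq H'$ gives $b(H',X)=b(H,X)$, and $|E(H')|=|E(H)|$. Thus it suffices to prove, for the graph $G-e$ obtained by deleting a single edge $e\in E(G)$ and for any $X\subseteq V(G)$, the per-edge estimate
\[
b(G-e,X)\le b(G,X)+1,
\]
since deleting the $|E(G)|-|E(H)|$ edges of $E(G)\setminus E(H')$ one at a time and iterating yields $b(H',X)\le b(G,X)+|E(G)|-|E(H)|$.

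The heart of the argument is this per-edge estimate, and the main obstacle is that deleting $e=xy$ can simultaneously delay many vertices by destroying the fire's ability to cross between $x$ and $y$, whereas a single extra round supplies only one new source; that source must therefore be placed so as to repair all of this damage at once. The key observation is that fire crosses $e$ in a single direction determined by burning times. Fix an optimal sequence $(v_1,\dots,v_k)$ for $b(G,X)=k$, and let $\tau(w)=\min_i\big(i+d_G(v_i,w)\big)$ be the round in which $w$ burns in $G$, so that every $w\in X$ has $\tau(w)\le k$. Let $z\in\{x,y\}$ be the endpoint of $e$ with the larger value of $\tau$, and in $G-e$ consider the sequence that seeds $z$ in round $1$ and then $v_i$ in round $i+1$, run for $k+1$ rounds; write $\tau''$ for its arrival-time function.

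I would then prove $\tau''(w)\le\tau(w)+1$ for every vertex $w$ with finite $\tau(w)$, by induction on $\tau(w)$. If $w$ is first ignited in $G$ as one of the sources, say $w=v_j$ with $\tau(w)=j$, then in the new sequence $v_j$ is seeded in round $j+1$, so $\tau''(w)\le j+1=\tau(w)+1$. Otherwise $w$ is ignited by spread from a neighbor $w'$ with $\tau(w')=\tau(w)-1$; here there are two sub-cases. If $w'w\ne e$, then $w'w\in E(G-e)$, and the inductive bound $\tau''(w')\le\tau(w)$ propagates along this surviving edge to give $\tau''(w)\le\tau(w)+1$. If instead $w'w=e$, then fire spread across $e$ from the earlier-burning endpoint $w'$ to the later-burning endpoint $w$, which by our choice forces $w=z$; the round-$1$ seed at $z$ then gives $\tau''(z)\le 1\le\tau(z)+1$ directly. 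This is exactly the point where choosing $z$ to be the \emph{later} endpoint resolves the direction ambiguity, so that a single reseeding suffices. The induction yields $\tau''(w)\le\tau(w)+1\le k+1$ for all $w\in X$, establishing the per-edge estimate and completing the proof.
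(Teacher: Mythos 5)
Your proof is correct and takes essentially the same route as the paper's: both arguments reduce the right inequality to a single-edge step and resolve it by prepending the later-burning endpoint of the missing edge as a new first source (your $z$ is exactly the paper's choice of the vertex $v$ burned no earlier than $u$). The only cosmetic differences are that you reduce to spanning subgraphs by adding isolated vertices where the paper instead adds missing vertices as pendant vertices, and your induction establishing $\tau''(w)\le\tau(w)+1$ spells out in detail what the paper justifies in one line.
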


\begin{proof}
By induction, it will suffice to show that
\begin{enumerate}[label=(\Alph*)]
	\item $b(H,X)\leq b(H+u+uv,X)+1$ for vertices $u\not \in V(H), v\in V(H)$.\label{item add a vertex}
	\item $b(H,X)\leq b(H+uv,X)+1$ for any edge $uv\in E(\overline{H})$.\label{item add an edge}
\end{enumerate}

For (\ref{item add a vertex}), we have that $b(H,X)=b(H+u+uv,X)$ since $u$ can only spread fire to $v$, and so any burning sequence containing $u$ can be replaced by a burning sequence containing $v$, and thus, will be a burning sequence in $H$. For (\ref{item add an edge}), fix some optimal burning sequence $S=(v_1,v_2,\dots,v_{b(H+uv,X)})$ of $H+uv$. Let us assume without loss of generality that when burning according to the sequence $S$, $u$ is burned in round $i$ and $v$ is burned in round $j$ with $i\leq j$ (in case one of $u$ or $v$ is not burnt after $b(H+uv,X)$ rounds, we will assume $v$ was not burnt). Observe that $(v,v_1,v_2,\dots,v_{b(H+uv,X)})$ is a burning sequence of $X$ in $H$ with length $b(H+uv,X)+1$ since the only effect the edge $uv$ could have had was to spread fire from $u$ to $v$, but here $v$ will already be burnt.
\end{proof}

\section{Lower Bound on the Burning Number of a Fence} \label{sec: lower bound}

Our lower bound for the burning number of fences will follow from analyzing the partial burning number of a collection of subpaths of $G_{m,n}$. If $P$ is a path in $G_{n,m}$, then we will say that $P$ is a \emph{horizontal path at height $h$} if $V(P)\subseteq \{v_{i,h}: 1\leq i\leq n\}$. Our next result will allow us to bound the number of vertices each source of fire burns if we are burning paths that are far enough apart.

\begin{lemma}\label{lemma conservation}
	Let $P^{(1)},P^{(2)},\dots,P^{(k)}\cong P_n$ be horizontal paths in $G=G_{m,n}$ at heights $h_1< h_2<\ldots<h_k,$ respectively, such that $h_i-h_{i-2}\geq 2t+2$ for $2 \leq i\leq k$. Let $\mathcal{P}=\bigcup_{i=1}^kV(P^{(i)})$. For $t\leq (n-1)/2,$ we have that
	\[
	\max_{v\in V(G)}|B(v,t)\cap \mathcal{P}|=\max_{v\in\mathcal{P}}|B(v,t)\cap \mathcal{P}|.
	\]
\end{lemma}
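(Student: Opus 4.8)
The inequality $\max_{v\in\mathcal P}|B(v,t)\cap\mathcal P|\le\max_{v\in V(G)}|B(v,t)\cap\mathcal P|$ is immediate from $\mathcal P\subseteq V(G)$, so the plan is to prove the reverse: starting from a global maximizer $v=v_{x,y}$, I will produce a vertex $v'\in\mathcal P$, in fact lying on one of the paths, with $|B(v',t)\cap\mathcal P|\ge|B(v,t)\cap\mathcal P|$.

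The first step is to exploit the spacing hypothesis to control how many paths a single ball can meet. A vertex lies in $B(v,t)$ only if its height is within $t$ of $y$, so $B(v,t)$ meets $P^{(i)}$ only when $|y-h_i|\le t$; in particular all met paths have heights in the window $[y-t,y+t]$, which has length $2t$. If three paths with indices $a<b<c$ were met, then $c\ge a+2$, and the assumption $h_{a+2}-h_a\ge 2t+2$ together with $h_c\ge h_{a+2}$ would force $h_c-h_a\ge 2t+2>2t$, a contradiction. Hence every ball meets at most two paths, and the same estimate shows that if it meets two, their indices must be consecutive, say $j$ and $j+1$.

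The second step is a vertical-shift argument. Since each horizontal path is a copy of $P_n$ spanning all $n$ columns, $B(v,t)\cap V(P^{(i)})$ is an interval of columns of length at most $2(t-|y-h_i|)+1$, with equality exactly when the ball's horizontal reach is not truncated by the ends of the path. The hypothesis $t\le (n-1)/2$ is precisely what guarantees a central column $x_0$ with $[x_0-t,x_0+t]\subseteq[1,n]$, so that a ball centered at $v_{x_0,h}$ realizes the full interval length $2t+1\le n$ on the path at height $h$ and the full length $2(t-|h-h'|)+1$ on any other path at height $h'$ within reach. Writing $M=|B(v,t)\cap\mathcal P|$, I bound $M$ above by the sum of the interval estimates over the at most two met paths, and then show the candidate $v'=v_{x_0,h_j}\in V(P^{(j)})$ attains at least this bound. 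When only one path is met, or when the two met paths satisfy $h_{j+1}-h_j> t$, the single term $2t+1$ at $v'$ already dominates $M$; when $h_{j+1}-h_j\le t$, centering on $P^{(j)}$ keeps $P^{(j+1)}$ in reach and the two untruncated intervals sum to $4t+2-2(h_{j+1}-h_j)\ge M$, using the triangle inequality $|y-h_j|+|y-h_{j+1}|\ge h_{j+1}-h_j$.

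I expect the main obstacle to be the bookkeeping in this last step: one must simultaneously remove the horizontal truncation, by recentering at the column $x_0$, which is where $t\le(n-1)/2$ enters, and the vertical offset, by recentering at the height $h_j$, while verifying that recentering never drops the second path out of range in the regime where it was contributing. Handling the two sub-cases $h_{j+1}-h_j\le t$ and $h_{j+1}-h_j>t$ separately, as above, is what makes the argument go through cleanly.
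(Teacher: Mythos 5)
Your proof is correct and follows essentially the same route as the paper's: the spacing hypothesis limits each ball to at most two (necessarily consecutive) paths, per-path interval bounds plus the triangle inequality give the upper estimate, and recentering on a path vertex attains it. If anything, you are slightly more careful than the paper, whose proof takes $v^{**}$ to be the closest vertex to $v$ in $P^{(a)}$ and asserts equality there --- overlooking possible horizontal truncation near the ends of the paths --- whereas your explicit recentering at a column $x_0$ with $[x_0-t,x_0+t]\subseteq[1,n]$ (which is exactly where $t\le (n-1)/2$ enters) handles that point cleanly.
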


\begin{proof}
	Given a vertex $v\in V(G)$, note that by the bound on the distance between paths, $B(v,t)$ intersects at most two of the horizontal paths $P^{(i)}$, and so
	\begin{align*}
	|B(v,t)\cap \mathcal{P}|\leq\max\{2(t-d(v,V(P^{(a)})))+1,0\}+\max\{2(t-d(v,V(P^{(a+1)})))+1,0\},
	\end{align*}
	where $P^{(a)}$ and $P^{(a+1)}$ are the two horizontal paths closest to $v$. If the quantity $\max\{d(v,V(P^{(a)})), d(v,V(P^{(a+1)}))\}\geq t$, then $|B(v,t)\cap \mathcal{P}|\leq 2t+1\leq |B(v^*,t)\cap \mathcal{P}|$, where $v^*$ is a central vertex in $P^{(1)}$.  Otherwise,
	\begin{align*}
	|B(v,t)\cap \mathcal{P}|&\leq 4t-2(d(v,V(P^{(a)})))+d(v,V(P^{(a+1)})))+2\\
	&\leq 4t-2(d(V(P^{(a)}),V(P^{(a+1)})))+2\\
	&=|B(v^{**},t)\cap \mathcal{P}|,
	\end{align*}
	where $v^{**}$ is the closest vertex to $v$ in $P^{(a)}$, completing the proof.
\end{proof}

We have the following lemma.

\begin{lemma} \label{lemma burning far away paths}
Let $P^{(1)},P^{(2)},\dots,P^{(k)}\cong P_n$ be horizontal paths at heights $h_1< h_2<\ldots<h_k,$ respectively, in $G=G_{n,m}$ such that $h_i-h_{i-1}\geq \sqrt{kn}$ for $2 \leq i\leq k$. Let $\mathcal{P}=\bigcup_{i=1}^kV(P^{(i)})$. We then have that
\[
\sqrt{kn} \leq b(G, \mathcal{P}) \leq \sqrt{kn} + k - 1 .
\]
\end{lemma}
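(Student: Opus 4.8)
The plan is to establish the two bounds separately, treating the lower bound $b(G,\mathcal{P}) \geq \sqrt{kn}$ and the upper bound $b(G,\mathcal{P}) \leq \sqrt{kn} + k - 1$ as independent arguments, and in both cases I would exploit the fact that the paths are spaced at vertical distance at least $\sqrt{kn}$ so they behave almost independently.

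For the lower bound, I would use a volume (counting) argument combined with \Cref{lemma conservation}. Suppose we burn $\mathcal{P}$ in $T$ rounds using sources $x_1,\dots,x_T$. A source introduced in round $j$ burns for $T-j+1$ rounds, hence can burn only vertices within distance $T-j$ of it, so it contributes at most $|B(x_j, T-j) \cap \mathcal{P}|$ vertices of $\mathcal{P}$. The total number of vertices in $\mathcal{P}$ is $kn$, so I would want
\begin{equation*}
kn = |\mathcal{P}| \leq \sum_{j=1}^{T} |B(x_j, T-j) \cap \mathcal{P}|.
\end{equation*}
The key is to bound each term. Provided the spacing hypothesis $h_i - h_{i-1} \geq \sqrt{kn}$ guarantees (at least for the relevant radii $t = T-j$) that the separation condition $h_i - h_{i-2} \geq 2t+2$ of \Cref{lemma conservation} holds, that lemma lets me assume each source sits on $\mathcal{P}$, and then $|B(x_j,t)\cap\mathcal{P}| \leq 2t+1$, since a ball of radius $t$ centered on a path meets at most one path (again by spacing) in at most $2t+1$ vertices. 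Summing $\sum_{j=1}^{T}(2(T-j)+1) = T^2$ gives $kn \leq T^2$, i.e. $T \geq \sqrt{kn}$. The delicate point is that for small $j$ the radius $T-j$ can be large, so I must check that the separation bound is strong enough, or else argue that balls of large radius saturate at $2t+1$ anyway (the first case in \Cref{lemma conservation}); this reconciliation is where I expect the most care is needed, since one genuinely needs $T$ itself to be on the order of $\sqrt{kn}$ for the spacing to dominate.

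For the upper bound, I would give an explicit burning strategy. Since $b(P_n) = \lceil\sqrt{n}\rceil$, there is an optimal burning sequence for a single path $P_n$ of length about $\sqrt{n}$; intuitively, placing sources along one path at spacing $2\sqrt{n}$ burns it. My plan is to burn all $k$ paths by interleaving: I devote a block of roughly $\sqrt{n/k}$ sources to each path so that each individual path $P^{(i)}$, viewed as a copy of $P_n$, gets burned using the partial burning of a path. Concretely, if each path can be burned in about $\sqrt{kn}/k \cdot k = \sqrt{kn}$ total rounds when the sources are shared cleverly, the count works out: distributing $\sqrt{kn}$ sources so that path $i$ receives a contiguous time-block means the fires laid on path $i$ have enough rounds remaining to cover the whole length-$n$ path. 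The $+\,k-1$ term would come from the loss of coordinating $k$ separate burning schedules — each path beyond the first may require one extra round of slack because its sources are introduced later in the sequence, echoing the additive loss seen in \Cref{lemma subgraph burning}.

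The main obstacle I anticipate is the upper bound bookkeeping: I must exhibit a single burning sequence of length $\sqrt{kn} + k - 1$ that simultaneously burns all $k$ paths, and verify that when sources are introduced in a staggered order the rounds remaining always suffice to cover each path's length. The vertical spacing $\geq \sqrt{kn}$ ensures fires on one path do not prematurely assist (or interfere with) another, so the paths really can be scheduled independently, but I would need to allocate the roughly $\sqrt{kn}$ source-slots across the $k$ paths — about $\sqrt{n/k}$ per path — and confirm each path is covered by balls of the appropriate radius, with the $k-1$ accounting for the staggering. Verifying this allocation arithmetic, rather than any conceptual difficulty, is where the proof will demand the most attention.
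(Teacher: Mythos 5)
Your lower bound follows the paper's argument in spirit (a volume count over $\mathcal{P}$ combined with \Cref{lemma conservation}), but the difficulty you flag is real and your proposed fallback for it is false: a ball of radius $t$ much larger than $\sqrt{kn}$ does \emph{not} saturate at $2t+1$ vertices of $\mathcal{P}$, since it can meet many of the horizontal paths and collect roughly $2(t-d_i)+1$ vertices from each path at distance $d_i\leq t$, which in total far exceeds $2t+1$. The correct resolution, and the one the paper uses, is to run the count as a proof by contradiction: suppose a burning sequence $(v_1,\dots,v_{\sqrt{kn}-1})$ of length $\sqrt{kn}-1$ burned all of $\mathcal{P}$. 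Then every radius appearing in the count is at most $\sqrt{kn}-2$, so the spacing hypothesis gives $h_i-h_{i-2}\geq 2\sqrt{kn}\geq 2t+2$ and \Cref{lemma conservation} applies to every source; each source then burns at most $2t+1$ vertices of $\mathcal{P}$, and the total is at most $\sum_{i=0}^{\sqrt{kn}-2}(2i+1)=(\sqrt{kn}-1)^2<kn=|\mathcal{P}|$, a contradiction. Your formulation, which tries to bound an arbitrary number of rounds $T$ from below, cannot be pushed through as stated when $T$ is large relative to the spacing; but for large $T$ there is nothing to prove, so restricting to $T<\sqrt{kn}$ costs nothing. You sensed this but did not close it, and the parenthetical escape route you offered does not exist.

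Your upper bound has a genuine gap: you never exhibit the allocation of source-times to the $k$ paths, and the bookkeeping you defer is fiddlier than you suggest (a greedy assignment of contiguous time-blocks overshoots by up to the size of one ball per path, and verifying that the slack $T^2-kn$ absorbs this requires real care). The paper avoids the entire construction with two applications of \Cref{lemma subgraph burning}. First, since consecutive heights differ by at least $\sqrt{kn}\geq 2$, the induced subgraph $G[\mathcal{P}]$ is exactly $kP_n$, so $b(G,\mathcal{P})\leq b(G[\mathcal{P}],\mathcal{P})=b(kP_n)$. Second, $kP_n$ is a spanning subgraph of $P_{kn}$ missing exactly $k-1$ edges, so $b(kP_n)\leq b(P_{kn})+k-1=\sqrt{kn}+k-1$. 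Your remark that the additive $k-1$ ``echoes'' \Cref{lemma subgraph burning} is exactly right; the shortcut is to invoke that lemma on the concatenated path $P_{kn}$ rather than to re-derive its content through an explicit interleaved schedule. As written, your proposal establishes neither bound without further work, though the lower bound is repaired by the standard contradiction framing above.
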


\begin{proof}
We first prove the lower bound. Note that since the horizontal paths $P^{(i)}$ are far apart, for all $1\leq t< \sqrt{kn}$,
	\[
	\max_{v\in\mathcal{P}}|B(v,t)\cap \mathcal{P}|\leq 2t+1.
	\]
	Furthermore, the conditions of Lemma~\ref{lemma conservation} are satisfied, so for all vertices $v\in V(G)$,
	\[
	|B(v,t)\cap \mathcal{P}|\leq 2t+1.
	\]
	Thus, if $S=(v_1,v_2,\dots,v_{\sqrt{kn}-1})$ is a burning sequence,
	\begin{align*}
	\sum_{i=1}^{\sqrt{kn}-1} |B(v_i,\sqrt{kn}-i-1)\cap \mathcal{P}|&\leq \sum_{i=0}^{\sqrt{kn}-2} 2i+1\\
	&=(\sqrt{kn}-1)^2\\
	&<kn=|\mathcal{P}|,
	\end{align*}
	and so $S$ cannot possibly burn all of $\mathcal{P}$, implying that $b(G, \mathcal{P})\geq \sqrt{kn}$.
	
	For the upper bound, observe that
	\begin{align*}
	b(G,\mathcal{P})&\leq b(G[\mathcal{P}])\\
	&=b(kP_n)\\
	&\leq b(P_{kn})+k-1=\sqrt{kn}+k-1,
	\end{align*}
	where the last inequality follows from Lemma~\ref{lemma subgraph burning} applied with $X=V(P_{k,n})$.
\end{proof}

To establish a lower bound on $b(G_{c\sqrt{n},n})$ for all $c > 0$, we consider the two cases separately: $0 < c < 2$ and $c \geq 2$. In the first case, our strategy for obtaining the lower bound is to consider the partial burning number of the top and bottom horizontal paths. In the second case, we will consider the partial burning number of a collection of horizontal paths that are sufficiently far apart.

\begin{theorem} \label{proposition c<2}
	If $0<c<2$ such that $c\sqrt{n}\in \mathbb{N}$, then
	\[
	b(G_{c\sqrt{n},n})\geq (1 + o(1))\left(\frac{c}2+\sqrt{1-\frac{c^2}4}\right) \sqrt{n}
	\]
\end{theorem}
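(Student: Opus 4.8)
The plan is to bound $b(G_{c\sqrt n,n})$ from below by a partial burning number. Write $G = G_{c\sqrt n,n}$, $m = c\sqrt n$, and let $\mathcal P = V(P^{(1)}) \cup V(P^{(2)})$ consist of the bottom horizontal path $P^{(1)}$ at height $1$ and the top horizontal path $P^{(2)}$ at height $m$, so that $|\mathcal P| = 2n$. Since any sequence burning all of $G$ in particular burns $\mathcal P$, we have $b(G) \ge b(G,\mathcal P)$, and it suffices to show $b(G,\mathcal P) \ge (1+o(1))\big(\tfrac c2 + \sqrt{1 - \tfrac{c^2}4}\big)\sqrt n$. I would get this from a volume/counting argument: if $(v_1,\dots,v_r)$ burns $\mathcal P$ within $r$ rounds, then $\mathcal P \subseteq \bigcup_{i} B(v_i, r-i)$, so
\[
2n = |\mathcal P| \le \sum_{i=1}^{r} |B(v_i, r-i) \cap \mathcal P| \le \sum_{t=0}^{r-1} M(t),
\qquad M(t) := \max_{v \in V(G)} |B(v,t) \cap \mathcal P|.
\]
Hence any $r$ with $\sum_{t=0}^{r-1} M(t) < 2n$ forces $b(G,\mathcal P) > r$.

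The key step, and what I expect to be the main obstacle, is computing $M(t)$. A ball $B(v,t)$ is an $\ell_1$-diamond; writing $v = (x,y)$ with $y$ its height, the number of path vertices captured at height $1$ is $\max\{2(t-(y-1))+1,0\}$ and at height $m$ is $\max\{2(t-(m-y))+1,0\}$ (truncation at the left/right ends only decreases the count and is irrelevant since $t = O(\sqrt n) \ll n$). The crucial observation is that when the ball reaches both paths the dependence on $y$ cancels: the total is $2(t-y+1) + 2(t-m+y) + 2 = 4t - 2m + 4$, independent of where between the paths $v$ sits; when it reaches only one path the count is at most $2t+1$. Therefore $M(t) = \max\{2t+1,\, 4t-2m+4\}$, which equals $2t+1$ for $t \lesssim m$ and $4t-2m+4$ for $t \gtrsim m$ (the switch occurring at $t = m - 3/2$).

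With $M(t)$ in hand I would estimate the cumulative sum. For $r \ge m$ it is $(m-1)^2 + 2r(r-m+1) = m^2 + 2r^2 - 2mr + O(\sqrt n)$, obtained by summing $2t+1$ up to $t \approx m$ and $4t-2m+4$ thereafter. Requiring this to reach $2n$ with $m = c\sqrt n$ and $r = \alpha\sqrt n$ gives, after dividing by $n$, the quadratic $2\alpha^2 - 2c\alpha + (c^2 - 2) = 0$, whose roots are $\alpha = \tfrac c2 \pm \sqrt{1-\tfrac{c^2}4}$. Since $m^2 + 2r^2 - 2mr$ is increasing for $r \ge m$, the relevant threshold is the larger root $\alpha^\ast = \tfrac c2 + \sqrt{1-\tfrac{c^2}4}$; thus any $r \le (1-\varepsilon)\alpha^\ast\sqrt n$ leaves the sum below $2n$, yielding $b(G,\mathcal P) \ge (1+o(1))\alpha^\ast\sqrt n$ and hence the claim.

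One subtlety remains: the $4t-2m+4$ term is needed precisely when a single ball can reach both paths, i.e. when $m-1 < \sqrt{2n}$, which is the range $c < \sqrt 2$; there $\alpha^\ast \ge c$, so the threshold genuinely lies in the regime $r \ge m$ and the root above is the correct value. When $c \ge \sqrt 2$ the two paths are at distance $m-1 \ge \sqrt{2n}$, so Lemma~\ref{lemma burning far away paths} with $k=2$ already gives $b(G,\mathcal P) \ge \sqrt{2n}$; since $\tfrac c2 + \sqrt{1-\tfrac{c^2}4} \le \sqrt 2$ for every $c$ (with equality at $c = \sqrt 2$), this is at least the stated bound, in fact with room to spare. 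The height-cancellation computation of $M(t)$ is the heart of the argument, the rest being routine asymptotic bookkeeping of the two regimes of $M$.
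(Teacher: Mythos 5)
Your proof is correct and takes essentially the same approach as the paper's: lower-bound $b(G)$ by the partial burning number of the top and bottom horizontal paths, bound the number of path vertices a ball of radius $t$ can cover by $2t+1$ in one regime and $4t-2m+O(1)$ in the other (the height cancellation you highlight is exactly the content of Lemma~\ref{lemma conservation} in the paper), and solve the resulting quadratic to obtain the threshold $\frac{c}{2}+\sqrt{1-\frac{c^2}{4}}$. If anything, your explicit root-selection argument and separate handling of $c\ge\sqrt{2}$ are more careful than the paper's proof, which delegates the final algebra to a SageMath computation.
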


\begin{proof}
	Let $P_{\bot}, P_{\top}\cong P_n$ be the horizontal paths at height $1$ and height $c\sqrt{n},$ respectively (that is, the bottom and top path), and let $\mathcal{P}=V(P{\bot})\cup V(P_{\top})$. We will bound the number $b(G_{c\sqrt{n},n},\mathcal{P})$.
	
	Let $(v_1,v_2,\dots,v_{b(G_{c\sqrt{n},n},\mathcal{P})})$ be an optimal burning sequence. For $0\leq t< c\sqrt{n}$, note that for vertices $v^*\in \mathcal{P}$, we have that $|B(v^*,t)\cap\mathcal{P}|\leq 2t+1$, so by Lemma \ref{lemma conservation}, $|B(v_{b(G_{c\sqrt{n},n})-t},t)\cap \mathcal{P}|\leq 2t+1$. This implies that the vertices in
	\[
	\{v_{b(G_{c\sqrt{n},n})-t}: 0\leq t< c\sqrt{n}-1\}
	\]
	can burn at most
	\begin{equation} \label{Covering sum 1}
	\sum_{i = 0}^{c \sqrt{n} - 1} (2i + 1)
	\end{equation}
	vertices in $\mathcal{P}$.
	
	Now, for $c\sqrt{n}\leq t\leq b(G_{c\sqrt{n},n})-1$, if $v^*\in \mathcal{P}$, then
	\[
	|B(v^*,t)\cap\mathcal{P}|\leq 2t+2+2(t-c\sqrt{n}-1),
	\]
	and so by Lemma~\ref{lemma conservation}, the vertices in
	\[
	\{v_{b(G_{c\sqrt{n},n})-t}: c\sqrt{n}\leq t< b(G_{c\sqrt{n},n})\}
	\]
	can burn at most
	\begin{equation} \label{Covering sum 2}
	\sum_{i = c \sqrt{n}}^{b(G_{c\sqrt{n},n})-1} \left(2i + 2 + 2\left(i - \left(c \sqrt{n} - 1 \right)\right)\right)
	\end{equation}
	vertices.
	
The sum of \eqref{Covering sum 1} and \eqref{Covering sum 2} must be greater than or equal to $2n = |\mathcal{P}|$. A computation in SageMath shows that such inequality holds if and only if \[b\big(G_{c\sqrt{n},n}\big) \geq \frac{1}{2} c\sqrt{n} \pm\sqrt{(c^2 + 4)n + 2 c^2 n - 4 c^2 n - 4 c^2 \sqrt{n} - 1} - \frac{3}{2} .\] Grouping the dominant terms, we derive that \[b\big(G_{c\sqrt{n},n}\big) \geq \left(\frac{1}{2} c + \frac{1}{2} \sqrt{4 - c^2} \right) \big(1 + o(1) \big) \sqrt{n} ,\]
and the proof follows.
\end{proof}

\begin{theorem} \label{proposition c>2}
If $c\geq 2$ such that $c\sqrt{n}\in\mathbb{N}$ and $\ell = \max\{k : (k - 1)\sqrt{kn} + 1 \leq c\sqrt{n}\}$, then
\[
b\big(G_{c\sqrt{n},n}\big) \geq \sqrt{\ell n}.
\]
\end{theorem}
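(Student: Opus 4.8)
The plan is to reduce the statement to an application of Lemma~\ref{lemma burning far away paths}, which already establishes that a collection of $\ell$ horizontal paths placed sufficiently far apart has partial burning number at least $\sqrt{\ell n}$. Since the partial burning number of any vertex subset is a lower bound for $b(G)$ itself (because burning all of $V(G)$ in particular burns any subset), it suffices to exhibit $\ell$ horizontal copies of $P_n$ inside $G_{c\sqrt{n},n}$ whose pairwise consecutive heights differ by at least $\sqrt{\ell n}$, so that the hypothesis of Lemma~\ref{lemma burning far away paths} is met. Once such a placement exists, we immediately get
\[
b\big(G_{c\sqrt{n},n}\big) \geq b\big(G_{c\sqrt{n},n}, \mathcal{P}\big) \geq \sqrt{\ell n},
\]
where $\mathcal{P}$ is the union of the vertex sets of those $\ell$ paths.

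First I would verify the geometric feasibility of the placement. The fence $G_{c\sqrt{n},n}$ has $c\sqrt{n}$ available heights, namely $1, 2, \ldots, c\sqrt{n}$. To fit $\ell$ horizontal paths with consecutive heights differing by at least $\sqrt{\ell n}$, I would place them at heights $h_i = 1 + (i-1)\lceil\sqrt{\ell n}\rceil$ for $1 \leq i \leq \ell$; the topmost path then sits at height $1 + (\ell - 1)\lceil\sqrt{\ell n}\rceil$. This fits inside the fence precisely when $(\ell - 1)\sqrt{\ell n} + 1 \leq c\sqrt{n}$, which is exactly the defining inequality in the statement's choice $\ell = \max\{k : (k-1)\sqrt{kn} + 1 \leq c\sqrt{n}\}$. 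Thus $\ell$ is chosen to be the largest number of rows that can be spaced this way, and by maximality the placement is both valid and optimal for this argument.

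The remaining step is bookkeeping: confirm that with these heights the spacing condition $h_i - h_{i-1} \geq \sqrt{\ell n}$ of Lemma~\ref{lemma burning far away paths} holds (it does, since $h_i - h_{i-1} = \lceil\sqrt{\ell n}\rceil \geq \sqrt{\ell n}$), and that each path is a genuine copy of $P_n$ spanning the full width. Applying the lemma with $k = \ell$ then yields $b(G_{c\sqrt{n},n}, \mathcal{P}) \geq \sqrt{\ell n}$, and combining with the monotonicity $b(G_{c\sqrt{n},n}, \mathcal{P}) \leq b(G_{c\sqrt{n},n})$ completes the proof.

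I do not expect a serious obstacle here, as the heavy lifting is already done inside Lemma~\ref{lemma burning far away paths} and Lemma~\ref{lemma conservation}. The only point requiring mild care is matching the ceiling in the integer height placement against the clean real-number bound $\sqrt{\ell n}$ in the defining inequality for $\ell$; one must check that rounding the spacing up to $\lceil\sqrt{\ell n}\rceil$ does not push the top path beyond height $c\sqrt{n}$. This is harmless asymptotically and, in the regime $c \geq 2$ where $\ell \geq 2$, the slack in the definition of $\ell$ absorbs the rounding, so the stated lower bound $\sqrt{\ell n}$ holds as written.
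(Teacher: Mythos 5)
Your route is exactly the paper's: the paper also places $\ell$ horizontal paths starting at height $1$, spaced $\sqrt{\ell n}$ apart (at heights $1+(i-1)\sqrt{\ell n}$), invokes Lemma~\ref{lemma burning far away paths} with $k=\ell$, and finishes with the monotonicity $b(G_{c\sqrt{n},n})\geq b(G_{c\sqrt{n},n},\mathcal{P})\geq\sqrt{\ell n}$. So in substance you have reconstructed the intended argument.

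However, the one step where you went beyond the paper--the integrality of the heights--is resolved incorrectly: the claim that ``the slack in the definition of $\ell$ absorbs the rounding'' is false in general. Take $n=13$ and $c\sqrt{n}=14$ (so $c\approx 3.88\geq 2$): since $2\sqrt{39}+1\approx 13.49\leq 14$ while $3\sqrt{52}+1>14$, we get $\ell=3$, but your heights $1,8,15$ overflow the fence; in fact no three integer heights in $\{1,\dots,14\}$ have consecutive gaps of at least $\sqrt{39}\approx 6.245$, so Lemma~\ref{lemma burning far away paths} \emph{as stated} cannot be applied here at all. To be fair, the paper's own proof has the identical defect, since its heights $1+(i-1)\sqrt{\ell n}$ need not be integers; both proofs are ``correct'' only up to this shared gap. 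A genuine repair is to space the paths $\lceil\sqrt{\ell n}\rceil-1$ apart, which always fits because $(\ell-1)\left(\lceil\sqrt{\ell n}\rceil-1\right)+1\leq(\ell-1)\sqrt{\ell n}+1\leq c\sqrt{n}$, and then to observe that although the hypothesis of Lemma~\ref{lemma burning far away paths} is no longer met verbatim, its proof (via Lemma~\ref{lemma conservation}) only needs each gap to exceed every ball radius that occurs: any burning sequence of length $\lceil\sqrt{\ell n}\rceil-1$ uses radii at most $\lceil\sqrt{\ell n}\rceil-2$, hence burns at most $\sum_{t=0}^{\lceil\sqrt{\ell n}\rceil-2}(2t+1)=\left(\lceil\sqrt{\ell n}\rceil-1\right)^2<\ell n=|\mathcal{P}|$ vertices of $\mathcal{P}$, giving $b(G_{c\sqrt{n},n},\mathcal{P})\geq\lceil\sqrt{\ell n}\rceil\geq\sqrt{\ell n}$ as required.
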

		
\begin{proof}
	For $1\leq i\leq \ell$, let $P^{(i)}$ be the horizontal path in $G_{c\sqrt{n},n}$ at height $1+\sqrt{\ell n}(i-1)$, and let $\mathcal{P}=\cup_{i=1}^\ell V(P^{(i)})$. The conditions of Lemma \ref{lemma burning far away paths} are then satisfied with $k=\ell$, so we have that
		\begin{align*}
		b(G_{c\sqrt{n},n})\geq b(G_{c\sqrt{n},n},\mathcal{P})\geq \sqrt{\ell n},
		\end{align*}
and the proof follows.
\end{proof}

\section{Upper Bound on the Burning Number of a Fence} \label{sec: upper bound}

Before we focus on an upper bound for the burning number of fences, we will quickly show that determining the asymptotics for $b(G_{m,n})$ when $m=o(\sqrt{n})$ is trivial. We have the following result on the burning number of graph products.

\begin{theorem}[\cite{Mits2018}] \label{thm graph product max min bounds}
	If $G$ and $H$ are both connected graphs, then \[
	b(G \boxempty H) \leq \min\{b(G) + \rad (H), b(H) + \rad (G)\} .
	\]
\end{theorem}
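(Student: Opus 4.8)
The plan is to prove the single inequality $b(G \boxempty H) \le b(G) + \rad(H)$; the companion bound with the roles of $G$ and $H$ interchanged then follows from the symmetry $G \boxempty H \cong H \boxempty G$, and taking the minimum of the two yields the stated result. The one structural fact I would invoke about Cartesian products is the additivity of distances, namely $d_{G \boxempty H}\big((u,w),(u',w')\big) = d_G(u,u') + d_H(w,w')$, which holds precisely because both factors are connected. In particular, a source of fire placed at $(u,w)$ that burns for $s$ rounds covers exactly the vertices $(u',w')$ with $d_G(u,u') + d_H(w,w') \le s-1$.

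Set $b = b(G)$ and $r = \rad(H)$. Let $(x_1, x_2, \dots, x_b)$ be an optimal burning sequence for $G$, and let $c$ be a center of $H$, so that $B_H(c,r) = V(H)$. Burning $G$ optimally means exactly that $\bigcup_{i=1}^{b} B_G(x_i, b-i) = V(G)$, since the source $x_i$ introduced in round $i$ has burned for $b-i+1$ rounds by the end of round $b$. The strategy I would use on $G \boxempty H$ is to introduce the source $(x_i, c)$ in round $i$ for each $1 \le i \le b$, and then let the fire continue to spread through round $b+r$ (placing any remaining unburned vertices as sources in the extra rounds is harmless and only helps). By round $b+r$ the source $(x_i,c)$ has burned for $(b+r)-i+1$ rounds, so it covers every $(v,w)$ with $d_G(x_i,v) + d_H(c,w) \le b+r-i$.

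To check that this burns all of $G \boxempty H$, I would fix an arbitrary vertex $(v,w)$. Since the $x_i$ cover $G$, there is an index $i$ with $d_G(x_i, v) \le b-i$, and since $c$ is a center of $H$ we have $d_H(c,w) \le r$. Adding these gives $d_G(x_i,v) + d_H(c,w) \le (b-i) + r = b+r-i$, so $(v,w)$ is burned by the source $(x_i,c)$ within $b+r$ rounds. Hence the process terminates by round $b+r$, which exhibits a burning strategy of length at most $b+r$ and therefore $b(G \boxempty H) \le b(G) + \rad(H)$.

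I expect no serious obstacle here; the only points requiring a little care are the justification of the distance-additivity formula (the standard argument that a shortest $(u,w)$–$(u',w')$ path decomposes into moves in the two coordinates, which genuinely needs both factors connected) and the bookkeeping that aligns the round index $i$ in the two factors so that the leftover time budget in $G \boxempty H$ is exactly the radius $r$ of $H$. Once the center $c$ is fixed as the second coordinate of every source, the two budgets $b-i$ and $r$ simply add, and the covering verification becomes immediate.
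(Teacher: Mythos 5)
Your proof is correct. Note, however, that the paper does not prove this statement at all: it is quoted as a known result from the Mitsche--Pra\l{}at--Roshanbin paper on burning numbers of graph products, so there is no in-paper argument to compare against. Your argument --- pairing an optimal burning sequence $(x_1,\dots,x_b)$ of $G$ with a center $c$ of $H$ and using additivity of distances in the Cartesian product --- is the standard proof of this bound, and it is sound: in particular, each source $(x_i,c)$ is genuinely unburned when placed, since $d_G(x_i,x_j)\geq i-j$ for $j<i$ by the validity of the original sequence in $G$, so your covering verification does yield a legitimate burning sequence of length at most $b(G)+\rad(H)$.
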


Theorem~\ref{thm graph product max min bounds} gives the following.
\begin{lemma}
	If $m=o(\sqrt{n})$, then we have that
	\[
	b(G_{m,n})=(1+o(1))\sqrt{n}.
	\]
\end{lemma}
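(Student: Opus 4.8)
The plan is to sandwich $b(G_{m,n})$ between a lower bound of $(1-o(1))\sqrt{n}$ and an upper bound of $(1+o(1))\sqrt{n}$ when $m=o(\sqrt{n})$. The upper bound is immediate from \Cref{thm graph product max min bounds}: taking $G=P_m$ and $H=P_n$, we have $b(G_{m,n})=b(P_m\boxempty P_n)\le b(P_n)+\rad(P_m)$. Since $b(P_n)=\lceil\sqrt{n}\rceil$ and $\rad(P_m)=\lceil (m-1)/2\rceil=o(\sqrt{n})$ under the hypothesis $m=o(\sqrt{n})$, this gives $b(G_{m,n})\le \sqrt{n}+o(\sqrt{n})=(1+o(1))\sqrt{n}$.

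For the lower bound, I would exploit the fact that $G_{m,n}$ contains $P_{mn}$ as a subgraph, or more directly that any graph on $N$ vertices has burning number at least the value forced by the ball-covering argument. The cleanest route is the general fact that for any graph $G$ on $N$ vertices, if $b(G)=b$, then the $b$ sources burn at most $\sum_{i=1}^{b} |B(v_i, b-i)|$ vertices; in the grid $G_{m,n}$ a ball of radius $r$ has at most $2r+1$ vertices in any fixed horizontal path and intersects at most $2r+1$ such paths, but since there are only $m=o(\sqrt{n})$ rows, a ball of radius $r$ contains at most $m(2r+1)$ vertices. Summing $\sum_{i=0}^{b-1} m(2i+1)=mb^2$ and requiring $mb^2\ge mn$ gives $b\ge\sqrt{n}$, hence $b(G_{m,n})\ge\sqrt{n}=(1-o(1))\sqrt{n}$. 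Combined with the upper bound, this yields $b(G_{m,n})=(1+o(1))\sqrt{n}$.

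The main obstacle is essentially cosmetic rather than mathematical: one must verify that $\rad(P_m)=o(\sqrt{n})$ follows correctly from $m=o(\sqrt{n})$, which is immediate since $\rad(P_m)\le m/2$. There is no real difficulty in either direction, which is precisely why the paper describes this case as trivial; the only care needed is to state the ball-size estimate $|B(v,r)|\le m(2r+1)$ for the lower bound, as this uses the restriction that the grid has only $m$ rows. I would present the upper bound as a one-line application of \Cref{thm graph product max min bounds} and the lower bound as a short ball-counting computation, noting that the two bounds match asymptotically.
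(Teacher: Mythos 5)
Your proposal is correct and takes essentially the same route as the paper: the upper bound is the identical one-line application of Theorem~\ref{thm graph product max min bounds} with $b(P_n)+\rad(P_m)=(1+o(1))\sqrt{n}$. Your lower bound is the same ball-counting idea in a slightly different accounting — the paper counts intersections of balls with a single horizontal path ($2r+1$ vertices per ball, forcing $b^2\geq n$), while you count all $mn$ vertices using $|B(v,r)|\leq m(2r+1)$; both yield $b(G_{m,n})\geq\sqrt{n}$.
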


\begin{proof}
	For the lower bound, observe that to burn any horizontal path of order $n$ in $G=G_{m,n}$, it takes at least $\lceil\sqrt{n}\rceil$ rounds.
	
	For the upper bound, we apply Theorem~\ref{thm graph product max min bounds} to derive that
	\[
	b(G)\leq b(P_n)+\rad (P_m)=\sqrt{n}+o(\sqrt{n})=(1+o(1))\sqrt{n}.
	\]
The proof now follows.
\end{proof}

We return our attention to fences. To establish an upper bound, we first present a lemma which is a useful generalization of Theorem \ref{thm graph product max min bounds}.

\begin{lemma}\label{lemma product bound generalization}
If $X\subseteq V(G)$ is a subset of vertices such that each vertex $v\in V(G)$ is distance at most $k$ from $X,$ then we have that
\[	
b(G)\leq b(G,X)+k.
\]
\end{lemma}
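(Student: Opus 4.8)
The plan is to burn the set $X$ optimally, and then use the fact that every vertex of $G$ lies within distance $k$ of some vertex of $X$ to guarantee that the fire reaches the rest of $G$ within $k$ additional rounds. Concretely, let $b(G,X) = s$ and fix an optimal partial burning sequence $(v_1, v_2, \dots, v_s)$ that burns all of $X$ in $s$ rounds. I would run this exact same sequence in $G$, placing the source of fire $v_i$ in round $i$ for each $1 \leq i \leq s$, and then add no further sources (or arbitrary sources if one insists on a full burning sequence) for rounds $s+1$ through $s+k$.

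First I would observe that after $s$ rounds, every vertex of $X$ is burning. The key step is then to track how the fire spreads from $X$ during the remaining $k$ rounds. Since a burning vertex ignites all of its unburned neighbors in the next round, after one additional round every vertex at distance at most $1$ from a burning vertex is burning; inductively, after $j$ additional rounds every vertex at distance at most $j$ from some vertex that was burning at round $s$ is also burning. In particular, after $k$ more rounds, every vertex within distance $k$ of $X$ is burning. By the hypothesis that each $v \in V(G)$ satisfies $d(v, X) \leq k$, this accounts for all of $V(G)$.

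Therefore the sequence burns all of $G$ in at most $s + k = b(G,X) + k$ rounds, which gives $b(G) \leq b(G,X) + k$, as desired. The argument is essentially a monotonicity-of-spread observation combined with the covering hypothesis, so I do not anticipate a genuine obstacle. The one point requiring mild care is the bookkeeping for the ``extra'' rounds: a burning sequence formally requires a new source each round while sources remain available, so to stay within the strict definition I would either append arbitrary available vertices as sources in rounds $s+1, \dots, s+k$ (these cannot slow the process down, since adding sources only burns more) or simply note that the spreading bound above already ensures completion by round $s+k$ regardless of those choices. This mirrors exactly how Theorem~\ref{thm graph product max min bounds} is recovered by taking $X$ to be a single central vertex of one factor, so the lemma is the natural partial-burning generalization.
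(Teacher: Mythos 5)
Your proposal is correct and follows exactly the paper's approach: burn $X$ optimally in $b(G,X)$ rounds, then note that the fire spreads one step per round, so every vertex (being within distance $k$ of $X$) burns within $k$ additional rounds. The paper states this in two sentences; your write-up merely makes the spreading induction and the source-bookkeeping explicit, which is a fine elaboration of the same argument.
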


\begin{proof}
	We first burn $X$ in $b(G,X)$ steps. Regardless of what other vertices we burn, after at most $k$ steps, the entire graph will be burned.
\end{proof}

In light of Lemma~\ref{lemma product bound generalization}, we can upper bound the burning number of our fence $G_{c\sqrt{n},n}$ if we can efficiently estimate $b\left(G_{c\sqrt{n},n},X\right)$ for some suitably chosen set $X\subseteq V(G)$. For our purposes, we will always choose $X$ to be the vertex set of a collection of horizontal paths. If $P^{(1)}$ and $P^{(2)}$ are two horizontal paths that are sufficiently far apart, then $b(G_{c\sqrt{n},n}, V(P^{(1)})\cup V(P^{(2)}))=2b(P_n)$. When $c$ is large, we find it useful to choose to burn few horizontal paths spaced evenly throughout the fence, but far apart from each other. In this case, since we do not need to worry about interactions between the horizontal paths we burn, we can easily provide an upper bound.

\begin{theorem}\label{theorem upper bound large c}
	If $\ell=\left\lceil (c/2)^{2/3}\right\rceil,$ then
	\[
	b(G_{c\sqrt{n},n})\leq 2\sqrt{\ell n}+\ell-1.
	\]
\end{theorem}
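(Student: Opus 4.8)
The plan is to invoke Lemma~\ref{lemma product bound generalization} with $X = \mathcal{P}$ the vertex set of a well-chosen family of $\ell$ horizontal paths, so that $b(G_{c\sqrt{n},n}) \leq b(G_{c\sqrt{n},n},\mathcal{P}) + k$, where $k$ is the maximum distance from a vertex of the fence to $\mathcal{P}$. The two terms pull in opposite directions: taking more paths makes $b(G_{c\sqrt{n},n},\mathcal{P})$ larger but the covering radius $k$ smaller, and the value $\ell = \lceil (c/2)^{2/3}\rceil$ is precisely the threshold at which the covering radius drops to $\sqrt{\ell n}$, matching the size of the partial burning term.

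First I would place the $\ell$ paths $P^{(1)},\dots,P^{(\ell)}$ at heights spaced (roughly) evenly through the fence of height $c\sqrt{n}$, for instance at $h_i = \lceil (2i-1)c\sqrt{n}/(2\ell)\rceil$. Since a horizontal path at height $h$ spans every column, the distance from a vertex $v_{j,r}$ to it is simply $|r-h|$; hence the covering radius equals $\max_r \min_i |r-h_i|$, which for this placement is at most $c\sqrt{n}/(2\ell)$. I would then record the elementary equivalence that the defining inequality $\ell \geq (c/2)^{2/3}$ is the same as $c\sqrt{n}/(2\ell) \leq \sqrt{\ell n}$, so that indeed $k \leq \sqrt{\ell n}$.

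Next I would bound the partial burning number $b(G_{c\sqrt{n},n},\mathcal{P})$. Because the chosen heights are pairwise non-adjacent (their spacing $c\sqrt{n}/\ell$ exceeds $1$ for $n$ large), the induced subgraph $G[\mathcal{P}]$ is exactly $\ell$ disjoint copies of $P_n$. Applying Lemma~\ref{lemma subgraph burning} twice --- once to pass from $G$ to the subgraph $G[\mathcal{P}]$, and once to compare $\ell P_n$ with $P_{\ell n}$, whose edge sets differ by $\ell-1$ edges --- yields $b(G_{c\sqrt{n},n},\mathcal{P}) \leq b(\ell P_n) \leq b(P_{\ell n}) + \ell - 1 = \lceil \sqrt{\ell n}\rceil + \ell - 1$. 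One could instead check that the heights are at least $\sqrt{\ell n}$ apart and quote Lemma~\ref{lemma burning far away paths}; the induced-subgraph route is preferable because it avoids the far-apart hypothesis, which the ceiling in the definition of $\ell$ can break for small $c$. Combining the two estimates gives $b(G_{c\sqrt{n},n}) \leq (\sqrt{\ell n} + \ell - 1) + \sqrt{\ell n} = 2\sqrt{\ell n} + \ell - 1$.

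The main obstacle I anticipate is the covering-radius estimate, namely confirming that $\ell$ evenly spaced paths genuinely achieve covering radius at most $\sqrt{\ell n}$ and that this is exactly what the calibrated choice $\ell = \lceil(c/2)^{2/3}\rceil$ buys. The surrounding arithmetic --- integrality of the $h_i$, the ceilings on $\sqrt{\ell n}$, and non-adjacency of the heights for large $n$ --- is routine but must be tracked carefully so that the clean target bound $2\sqrt{\ell n} + \ell - 1$ survives the rounding.
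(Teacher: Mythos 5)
Your proposal is correct and follows essentially the same route as the paper: bound $b(G_{c\sqrt{n},n},\mathcal{P})\leq b(\ell P_n)\leq \sqrt{\ell n}+\ell-1$ via Lemma~\ref{lemma subgraph burning} (treating $\ell P_n$ as $P_{\ell n}$ with $\ell-1$ edges deleted), then apply Lemma~\ref{lemma product bound generalization} with covering radius $\sqrt{\ell n}$, which is exactly what the calibration $\ell=\lceil (c/2)^{2/3}\rceil$, i.e.\ $2\ell^{3/2}\geq c$, guarantees. Your evenly-spaced placement of the paths is only a cosmetic variant of the paper's placement at heights $(2\sqrt{\ell n}+1)i+\sqrt{\ell n}+1$ (and is arguably tidier, since the paper's topmost path can nominally exceed height $c\sqrt{n}$ when $2\ell^{3/2}>c$), and both versions share the same mild rounding sloppiness around $\lceil\sqrt{\ell n}\rceil$ in the final count.
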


\begin{proof}
	For $0\leq i\leq \ell-1$, let $P^{(i)}$ denote the horizontal path in $G_{n,c\sqrt{n}}$ at height $(2\sqrt{\ell n}+1)i+\sqrt{\ell n}+1$. Let $\mathcal{P}=\bigcup_{i=0}^{\ell-1} V(P^{(i)})$. We then have that
	\begin{align*}
	b(G_{c\sqrt{n},n},\mathcal{P})&\leq b(\ell P_n)\\
	&\leq \sqrt{\ell n}+\ell-1,
	\end{align*}
	where the second inequality follows from Lemma \ref{lemma subgraph burning}, and the fact that $\ell P_n$ is a subgraph of $P_{\ell n}$ with $\ell-1$ edges missing.
	After each of these paths have burned, by our choice of $\ell$, we have that our $\ell$ horizontal paths are spaced exactly $2\sqrt{\ell n}$ apart, $P^{(0)}$ is at height $\sqrt{\ell n}+1$, and $P^{(\ell-1)}$ is at height
	\begin{align*}
	(2\ell-1)\sqrt{\ell n}+\ell&\geq 2\ell^{3/2}\sqrt{n}-\sqrt{\ell n}\\
	&\geq c\sqrt{n}-\sqrt{\ell n},
	\end{align*}
	so every vertex in $G_{c\sqrt{n},n}$ is distance at most $\sqrt{\ell n}$ from a burned vertex, so by Lemma \ref{lemma product bound generalization}, after at most $\sqrt{\ell n}$ more steps, the entire fence is burned.
\end{proof}

The preceding theorem holds for all values of $c$, but for small $c$, the upper bound can be improved by considering two horizontal paths, $P^{(1)}$ and $P^{(2)}$ that are close enough to each other such that $b(G_{c\sqrt{n},n}, V(P^{(1)})\cup V(P^{(2)}))<2b(P_n)$.

\begin{lemma} \label{lemma burn top and bottom}
Let $0 < c\leq \sqrt{2}$, and $G = G_{c\sqrt{n},n}$, let $P_{\bot}, P_{\top}\cong P_n$ be the horizontal paths in $G$ of order $n$ at height $1$ and $c\sqrt{n}$, respectively, and let $\mathcal{P}=V(P_\bot)\cup V(P_\top)$. We then have that
\[
b(G_{c\sqrt{n},n},\mathcal{P})\leq (1+o(1))\left(\frac{c}2+\sqrt{1-\frac{c^2}4}\right)\sqrt{n}
\]
\end{lemma}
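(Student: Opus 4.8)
The plan is to exhibit an explicit burning sequence that burns $\mathcal{P}$ in $(1+o(1))\alpha\sqrt{n}$ rounds, where I abbreviate $\alpha=\frac{c}{2}+\sqrt{1-\frac{c^2}{4}}$. I will use the standard covering reformulation of partial burning: to prove $b(G,\mathcal{P})\le b$ it suffices to produce vertices $v_1,\dots,v_b$ with $\mathcal{P}\subseteq\bigcup_{i=1}^{b}B(v_i,b-i)$, since a source ignited in round $i$ burns exactly $B(v_i,b-i)$ by round $b$, and any such family can be realized as a genuine burning sequence (whenever a prescribed $v_i$ is already on fire one simply ignites an arbitrary unburned vertex, which only burns more). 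Thus I have at my disposal one ball of each radius $0,1,\dots,b-1$, and the task reduces to a covering problem for the two length-$n$ paths $P_\bot$ and $P_\top$.

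The geometric input is a dichotomy for where to place a ball of radius $r$. A source placed on a single path (height $1$ or $c\sqrt{n}$) covers $2r+1$ consecutive columns of that path and nothing of the other, whereas a source placed at the central height $h_0\approx c\sqrt{n}/2$ reaches both paths as soon as $r\ge c\sqrt{n}/2$ and then covers $2(r-c\sqrt{n}/2)+1$ consecutive columns on each, at essentially the same columns. I would therefore split the available radii at the threshold $c\sqrt{n}$: radii $r\in[c\sqrt{n},b-1]$ are used as central, double-duty balls tiling an initial block of columns on both paths simultaneously, while radii $r\in[0,c\sqrt{n})$ are split evenly between $P_\bot$ and $P_\top$ and placed on their respective paths to tile the remaining columns. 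Summing coverage, the central balls contribute $\sum_{r=c\sqrt{n}}^{b-1}\big(2(r-c\sqrt{n}/2)+1\big)=(b-c\sqrt{n}/2)^2-(c\sqrt{n}/2)^2$ columns per path, and the single-duty balls contribute $\tfrac12\sum_{r=0}^{c\sqrt{n}-1}(2r+1)=\tfrac12 c^2 n$ columns per path; requiring the total to reach $n$ reduces, after writing $b=\alpha\sqrt n$, to $\alpha^2-c\alpha+\tfrac{c^2}{2}\ge 1$, whose relevant root is exactly $\alpha=\frac{c}{2}+\sqrt{1-\frac{c^2}{4}}$. Minimizing the per-path coverage expression over a general split point confirms that the threshold $c\sqrt n$ is optimal, which is precisely where this value of $\alpha$ comes from.

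Two points need care. First, the hypothesis $c\le\sqrt{2}$ is exactly what guarantees $\alpha\ge c$, i.e.\ that the central regime $[c\sqrt{n},b-1]$ is nonempty and the strategy does not degenerate into the pure ``split evenly'' bound $\sqrt{2}\sqrt n$; this is the role of the constraint in the statement. Second --- and this is the main obstacle --- the computation above is tight, so there is essentially no slack: the per-path coverage equals $n$ at the claimed leading order. I therefore have to verify that all lower-order losses are genuinely $o(\sqrt n)$ per path and can be absorbed into the $(1+o(1))$. The central height $h_0$ makes the two distances to $P_\bot$ and $P_\top$ differ by at most one, so each central ball may cover two fewer columns on one path than the other; the integer interval lengths create rounding when tiling $[1,n]$; and the even split of the single-duty radii is only approximate. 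Each of these introduces a deficit of $O(\sqrt n)=o(n)$ columns, repairable by enlarging $b$ by an $o(\sqrt n)$ number of extra small sources without affecting the leading constant. Carrying out the tiling explicitly, placing the chosen intervals end to end and checking realizability against the covering characterization, is the remaining routine, if slightly fiddly, step.
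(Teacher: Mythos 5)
Your construction is correct (modulo the tiling details you flag as routine) and achieves exactly the paper's bound, but by a genuinely different placement. The paper never uses interior vertices: its double-duty sources are placed \emph{on} the two paths, alternating between $P_\bot$ and $P_\top$ with a column offset chosen so that consecutive balls $B(v_{1,i},t)$ and $B(v_{c\sqrt{n},\,i+2t-c\sqrt{n}+1},t-1)$ are disjoint yet meet each of $P_\bot$, $P_\top$ in an interval; a source burning for $i\ge c\sqrt{n}$ rounds then covers $2i-1$ columns of its own path and $2i-2c\sqrt{n}+1$ of the opposite one, i.e.\ $4i-2c\sqrt{n}$ vertices of $\mathcal{P}$, which matches your central balls' $4i-2c\sqrt{n}-2$ up to lower order. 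The small radii are also handled differently: instead of your even split of the radii $0,\dots,c\sqrt{n}-1$ between the two paths, the paper observes that the unburned part of $\mathcal{P}$ is a union of at most three paths on $2n-k$ vertices and burns it in at most $\sqrt{2n-k}+2$ further rounds via Lemma~\ref{lemma subgraph burning}, requiring $\sqrt{2n-k}+2\le c\sqrt{n}$. Both routes reduce to the same quadratic $\alpha^2-c\alpha+c^2/2\ge 1$, hence the same constant. What each buys: your symmetric central placement makes the optimization transparent (including why the split at radius $c\sqrt{n}$ is optimal and why $c\le\sqrt{2}$ is the natural boundary), while the paper's on-path placement plus the reduction to $b(P_{2n-k})$ sidesteps essentially all parity and tiling bookkeeping, since disjointness and connectivity are checked once and the path-burning formula absorbs the rounding.

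One warning about your cleanup step. If the central balls are tiled naively, the parity deficit materializes as $\Theta(\sqrt{n})$ \emph{scattered} gaps of $O(1)$ columns each, and these cannot be repaired by appending $o(\sqrt{n})$ extra small sources, since each isolated gap would demand its own source. The fix should be structural rather than additive: choose the central height so that the interval a ball covers on one path is concentric with (hence contained in) the interval it covers on the other, and tile end-to-end by the smaller interval, so that no gaps arise on either path; alternatively, keep your centers and increase $b$ by an additive constant, which grows every ball by one unit of radius and closes all $O(1)$ gaps simultaneously. With that adjustment your argument goes through as stated.
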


\begin{proof}
Let $m=c\sqrt{n}$, $b=(1+o(1))\left(\frac{c}2+\sqrt{1-\frac{c^2}4}\right)\sqrt{n}$ and $V(G) = \{v_{i,j} : (i,j) \in [m] \times [n] \}$. Note that for all $i\geq 0$, $t\geq m-1$ we have that
\[
B(v_{1,i}, t) \cap B(v_{m,i+2t-m+1}, t - 1) = \emptyset,
\]
while the intersection of
\[
B(v_{1,i}, t) \cup B(v_{m,i+2t-m+1}, t - 1)
\]
with $V(P_\top)$ and with $V(P_\bot)$ induces a connected path in either case. Analogously, for all $i\geq 0$, $t\geq m-1$,
\[
B(v_{m,i}, t) \cap B(v_{1,i+2t-m+1}, t - 1) = \emptyset,
\]
while
\[
B(v_{m,i}, t) \cup B(v_{1,i+2t-m+1}, t - 1)
\]
intersects $V(P_\top)$ and $V(P_\bot)$ in a connected path in each case. Thus, we can place the first $b-m$ sources of fire by alternatingly placing vertices on the top and bottom path such that no vertex in $\mathcal{P}$ is burned by two sources, and that the vertices in $\mathcal{P}$ burned by these sources induce two paths. Furthermore, if we place the first source at $v_{b,1}$, and then continue placing these $b-m$ sources further to the right, we have that each source that burns for $i\geq m$ rounds burns a total of $4i-2m$ vertices ($2i-1$ vertices on one horizontal path and $2i-2m+1$ vertices on the other). These sources burn a total of
\[
\sum_{i=m}^b (4i-2m)=2b(b-m+1)=k
\]
vertices in $\mathcal{P}$. Furthermore, the vertices in $\mathcal{P}$ that are not burnt by these $b-m$ sources constitute at most three paths (a path containing the vertex $v_{m,1}$, another containing $v_{m,n}$, and a third containing $v_{1,n}$). The orders of these three paths sum up to $2n-k$, and if we consider these three paths as a spanning subgraph of $P_{2n-k}$, then via Lemma \ref{lemma subgraph burning} (applied with $X=V(P_{2n-k})$), the remaining vertices can be burned with at most $b(P_{2n-k})+2\leq \sqrt{2n-k}+2$ sources of fire. Thus, if $\sqrt{2n-k}+2\leq m$, we are done. Indeed, it can be routinely verified (for example, via SageMath) that as long as
\begin{align*}
b&\geq \frac{1}2\left(m+\sqrt{4n-m^{2}-2m+1}+1\right)\\
&=(1+o(1))\frac{1}2\left(m+\sqrt{4n-m^{2}}\right)\\
&=(1+o(1))\left(\frac{c}2+\sqrt{1-\frac{c^2}4}\right)\sqrt{n},
\end{align*}
we have that $\sqrt{2n-k}+2\leq m$, and we are done.
\end{proof}

Our final theorem establishes the upper bound when $c$ is small.

\begin{theorem}\label{proposition upper bound}
If $0< c\leq 2\sqrt{2}$, we then have that
\[
b(G_{c\sqrt{n},n})\leq (1+o(1))\left(\frac{c}2+\sqrt{1+\frac{c^2}{16}}\right)\sqrt{n}.
\]
\end{theorem}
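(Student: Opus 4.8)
The plan is to handle the full range $0 < c \le 2\sqrt{2}$ by reducing to the two–path construction of Lemma~\ref{lemma burn top and bottom}, which by itself only applies to fences of aspect parameter at most $\sqrt 2$. The device is to burn two horizontal paths placed at the \emph{quarter–heights} of the fence rather than at its extreme top and bottom. Writing $G = G_{c\sqrt n,n}$, I would take $P^{(1)}, P^{(2)} \cong P_n$ to be the horizontal paths at heights $\approx c\sqrt n/4$ and $\approx 3c\sqrt n/4$, so that they cut $G$ into three horizontal strips each of height $(1+o(1))\,c\sqrt n/4$, and set $X = V(P^{(1)}) \cup V(P^{(2)})$. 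The strategy is to first burn $X$ efficiently and then let the fire flood the three strips, invoking Lemma~\ref{lemma product bound generalization}.

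The first step is to bound $b(G, X)$. The middle strip bounded by $P^{(1)}$ and $P^{(2)}$ is itself a fence $H \cong G_{c'\sqrt n, n}$ with $c' = (1+o(1))\,c/2$, and $P^{(1)}, P^{(2)}$ are precisely its top and bottom paths. Since $c \le 2\sqrt 2$ forces $c' \le \sqrt 2$, Lemma~\ref{lemma burn top and bottom} applies to $H$ and gives
\[
b(H, X) \le (1+o(1))\left(\frac{c'}2 + \sqrt{1 - \frac{c'^2}4}\right)\sqrt n = (1+o(1))\left(\frac c4 + \sqrt{1 - \frac{c^2}{16}}\right)\sqrt n.
\]
Because $H$ is a subgraph of $G$, the left inequality of Lemma~\ref{lemma subgraph burning} yields $b(G, X) \le b(H, X)$, so the same bound holds for $b(G, X)$.

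The second step is to add the vertical reach. By the placement of the paths, every vertex of $G$ lies within grid–distance $k = (1+o(1))\,c\sqrt n/4$ of $X$, since any vertex below $P^{(1)}$, between the two paths, or above $P^{(2)}$ is within the relevant strip height of a burned vertex. Lemma~\ref{lemma product bound generalization} then gives
\[
b(G) \le b(G, X) + k \le (1+o(1))\left(\frac c4 + \sqrt{1 - \frac{c^2}{16}} + \frac c4\right)\sqrt n = (1+o(1))\left(\frac c2 + \sqrt{1 - \frac{c^2}{16}}\right)\sqrt n.
\]
Finally, since $\sqrt{1 - c^2/16} \le \sqrt{1 + c^2/16}$ for every real $c$, this bound is at most $(1+o(1))\left(\frac c2 + \sqrt{1 + \frac{c^2}{16}}\right)\sqrt n$, which is exactly the claimed inequality.

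The step I expect to require the most care is the transfer in the first paragraph: Lemma~\ref{lemma burn top and bottom} is stated for the top and bottom of a fence, and its proof places all sources inside that fence, whereas here I need to burn $P^{(1)}, P^{(2)}$ sitting in the interior of the larger grid $G$. This is exactly what the monotonicity inequality $b(G,X) \le b(H,X)$ of Lemma~\ref{lemma subgraph burning} supplies, the point being that the additional rows of $G$ can only make burning $X$ easier. A secondary bookkeeping issue is the optimal placement of the two paths: equalizing the three strip heights pins them at heights $\approx c\sqrt n/4$ and $\approx 3c\sqrt n/4$, which simultaneously makes the middle strip have parameter $c' = c/2$ (so that the hypothesis of Lemma~\ref{lemma burn top and bottom} holds precisely up to $c = 2\sqrt 2$) and makes the reach equal to $k = c\sqrt n/4$; the two contributions of $c/4$ then combine into the $c/2$ coefficient, with all floors, ceilings, and off–by–one terms absorbed into the $(1+o(1))$ factor.
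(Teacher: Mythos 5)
Your proposal is correct and takes essentially the same route as the paper's own proof: two horizontal paths at the quarter-heights, the monotonicity $b(G,X)\le b(H,X)$ from Lemma~\ref{lemma subgraph burning} applied to the middle strip $H\cong G_{(c/2)\sqrt{n},n}$ so that Lemma~\ref{lemma burn top and bottom} applies with $c'=c/2\le\sqrt{2}$, and then Lemma~\ref{lemma product bound generalization} with reach $(1+o(1))\,c\sqrt{n}/4$. In fact you derive the stronger bound with $\sqrt{1-c^2/16}$, which is what Theorem~\ref{thm main result} records (the $\sqrt{1+c^2/16}$ in this theorem's statement appears to be a sign typo in the paper), so your final weakening step is harmless.
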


\begin{proof}
	Let $P^{(1)}$ and $P^{(2)}$ be the horizontal paths at height $c/4$ and $3c/4-1,$ respectively, in $G_{c\sqrt{n},n}$, and let $\mathcal{P}=V(P^{(1)})\cup V(P^{(2)})$. Let $H\cong G_{\frac{c}2\sqrt{n}}$ be the subgraph of $G$ induced on the vertices at heights inclusively between $c/4$ and $3c/4-1$. We then have that
	\begin{align*}
	b(G_{c\sqrt{n},n}, \mathcal{P})&\leq b(H,\mathcal{P})\\
	&\leq (1+o(1))\left(\frac{c}4+\sqrt{1+\frac{c^2}{16}}\right),
	\end{align*}
	where the first inequality follows from Lemma \ref{lemma subgraph burning}, and the second follows from Lemma \ref{lemma burn top and bottom} applied with $c'=c/2\leq \sqrt{2}$. Since every vertex in $G_{c\sqrt{n},n}$ is distance at most $c/4+1$ from $P^{(1)}$ or $P^{(2)}$, via Lemma \ref{lemma product bound generalization}, we have that
	\begin{align*}
	b(G_{c\sqrt{n},n})&\leq b(G_{c\sqrt{n},n}, \mathcal{P})+ c/4 +1\\
	&\leq (1+o(1))\left(\frac{c}2+\sqrt{1+\frac{c^2}{16}}\right)\sqrt{n},
	\end{align*}
	and the proof follows.
\end{proof}

\section{Conclusions and Future Directions}

We found new bounds on the burning number of fence graphs $G_{c\sqrt{n},n}$ for constant $c$ in Theorem~\ref{thm main result}. We note that Theorem \ref{thm main result} implies that there exist constants $C_1$ and $C_2$ independent of both $n$ and $c$ such that
\[
C_1c^{1/3}\sqrt{n}\leq b(G_{c\sqrt{n},n})\leq C_2 c^{1/3}\sqrt{n},
\]
which is consistent with the growth rate of the bounds given in Theorem \ref{thm grid burning number} when $c=\omega(1)$. Our bounds in Theorem \ref{thm main result} are not asymptotically tight, so it would be interesting to determine the constant on the leading term for the burning number of fences.

Another direction worth exploring is extending our results to the setting of strong products. The \textit{strong product} of $G$ and $H$, denoted $G \boxtimes H$, has vertices $V(G) \times V(H)$, and edges $(u_1, u_2)(v_1, v_2) \in E$ if either $u_1 = v_1$ and $u_2 v_2 \in E(H)$, $u_1 v_1 \in E(G)$ and $u_2 = v_2$, or $u_1 v_1 \in E(G)$ and $u_2 v_2 \in E(H).$ Observe that $G \boxempty H$ is a spanning subgraphs of $G \boxtimes H$, so many of our results extend to strong products of paths. In \cite{Mits2018} it was found that for $m\leq n$,
\[
b\big(P_m\boxtimes P_n \big) = \begin{cases}
\big(1 + o(1) \big)\sqrt[3]{\frac{3}{4} mn} \: & \mbox{ if } m = \omega\big(\sqrt{n}\big), \\[0.5cm]
\Theta\big(\sqrt{n}\big) \: & \mbox{ if } m = O\big(\sqrt{n}\big).
\end{cases}
\]
Another open direction is to improve the bounds on $b(P_m\boxtimes P_n)$ in the case of $m = O(\sqrt{n})$.


\begin{thebibliography}{16} 

\bibitem{Bessy2018} S.\ Bessy, A.\ Bonato, J.\ Janssen, D.\ Rautenbach, E.\ Roshanbin, Bounds on the burning number, \textit{Discrete Applied Mathematics} \textbf{235} (2018) 16-22.

\bibitem{Bessy2017} S.\ Bessy, A.\ Bonato, J.\ Janssen, D.\ Rautenbach, E.\ Roshanbin, Burning a graph is hard, \textit{Discrete Applied Mathematics} \textbf{232} (2017) 73-87.

\bibitem{bgs} A.\ Bonato, K.\ Gunderson, A.\ Shaw, Burning the plane: Densities of the infinite Cartesian grid, Preprint 2019. 

\bibitem{Bon2014} A.\ Bonato, J.\ Janssen, E.\ Roshanbin, Burning a graph as a model of social contagion, In: \textit{Proceedings of Workshop on Algorithms and Models for the Web Graph} 2014.

\bibitem{Bon2016} A.\ Bonato, J.\ Janssen, E.\ Roshanbin, How to burn a graph, \textit{Internet Mathematics} \textbf{12} (2016) 85-100.

\bibitem{Bon-Kam} A.\ Bonato, S.\ Kamali, Approximation algorithms for graph burning, In: \emph{Proceedings of TAMC'19}, 2019.

\bibitem{bp} A.~Bonato, P.\ Pra\l{}at, \emph{Graph Searching Games and Probabilistic Methods}, CRC Press, 2017.

\bibitem{Kam} S.\ Kamali, A.\ Miller, K.\ Zhang, Burning two worlds: algorithms for burning dense and tree-like graphs, Preprint 2019.

\bibitem{LL} M.R.\ Land, L.\ Lu, An upper bound on the burning number of graphs, In: \textit{Proceedings of Workshop on Algorithms and Models for the Web Graph} 2016.

\bibitem{Mits2017} D.\ Mitsche, P.\ Pra\l{}at, E.\ Roshanbin, Burning graphs: a probabilistic perspective, \textit{Graphs and Combinatorics} \textbf{33} (2017) 449-471.

\bibitem{Mits2018} D.\ Mitsche, P.\ Pra\l{}at, E.\ Roshanbin, Burning number of graph products, \textit{Theoretical Computer Science} \textbf{746} (2018) 124-135.

\bibitem{thez} E.\ Roshanbin, \emph{Burning a graph as a model of social contagion}, PhD Thesis, Dalhousie University, 2016.

\bibitem{West} D.B.\ West, \textit{Introduction to Graph Theory, 2\textsuperscript{nd} edition}, Prentice Hall, 2001.

\end{thebibliography}
\end{document}